 \theoremstyle{plain}
\newtheorem{theorem}{Theorem}
\newtheorem{corollary}{Corollary}
\newtheorem{lemma}{Lemma}
\newtheorem{proposition}{Proposition}
\newtheorem{example}{Example}
\theoremstyle{definition}
\newtheorem{definition}{Definition}
\theoremstyle{remark}
\numberwithin{equation}{section}
\newcommand{\bT}{\begin{theorem}}
\newcommand{\eT}{\end{theorem}}
\newcommand{\bProp}{\begin{proposition}}
\newcommand{\eProp}{\end{proposition}}
\newcommand{\bE}{\begin{example}}
\newcommand{\eE}{\end{example}}
\newcommand{\bL}{\begin{lemma}}
\newcommand{\eL}{\end{lemma}}
\newcommand{\bP}{\begin{proof}}
\newcommand{\eP}{\end{proof}}
\newcommand{\bC}{\begin{corollary}}
\newcommand{\eC}{\end{corollary}}
\newcommand{\bD}{\begin{definition}}
\newcommand{\eD}{\end{definition}}
\newcommand{\be}{\begin{enumerate}}
\newcommand{\ee}{\end{enumerate}}
\newcommand{\beqa}{\begin{eqnarray*}}
\newcommand{\eeqa}{\end{eqnarray*}}
\newcommand{\beqaa}{\begin{eqnarray}}
\newcommand{\eeqaa}{\end{eqnarray}}
\newcommand{\ba}{\begin{array}}
\newcommand{\ea}{\end{array}}
\newdimen\plusheight
\def\+{\;\lower\plusheight\hbox{$+$}\;}
\newdimen\minusheight
\def\-{\;\lower\minusheight\hbox{$-$}\;}
\newdimen\cdotsheight
\def\cds{\lower\cdotsheight\hbox{$\cdots$}}
\newtheorem{ca}{Figure}
\renewcommand{\mod}{\textup{mod}\,}
\begin{document}

\title[Hybrid Proofs of the $q$-Binomial Theorem]{Hybrid Proofs of the $q$-Binomial Theorem and other identities}

\author{Dennis Eichhorn}
 \address{Department of Mathematics
 University of California, Irvine, Irvine, CA 92697-3875 }
 \email{deichhor@math.uci.edu}
 
\author{James Mc Laughlin}
\address{Mathematics Department\\
West Chester University, West Chester, PA 19383}
\email{jmclaughlin2@wcupa.edu}

\author{Andrew V. Sills}
\address{Department of Mathematical Sciences\\
 Georgia Southern University\\
Statesboro, GA; telephone 912-681-5892; fax 912-681-0654}
\email{asills@georgiasouthern.edu}

\keywords{ partitions, integer partitions,  $q$-binomial theorem,
$q$-series, basic hypergeometric series, Ramanujan}

 \subjclass[2000]{ 11P84, 11P81}
\date{Sept 10, 2010}

\maketitle

\begin{abstract}
We give  ``hybrid" proofs of the $q$-binomial theorem and other identities. The proofs are ``hybrid"  in the sense that we use partition arguments to prove a restricted version of the theorem, and then use analytic methods (in the form of the Identity Theorem) to prove the full version.

We prove three somewhat unusual summation formulae, and use these to give hybrid proofs of a number of identities due to Ramanujan.

Finally, we use these new summation formulae to give new partition interpretations of the Rogers-Ramanujan identities and the Rogers-Selberg identities.
\end{abstract}

\maketitle

\setcounter{section}{0}

\section{Introduction}\label{introduction}

The proof of a $q$-series identity, whether a series-to-series
identity such as the second iterate of Heine's transformation (see
\eqref{heine2} below), a basic hypergeometric summation formula such
as the $q$-Binomial Theorem
  (see \eqref{bintheoeq0}) or one of the Rogers-Ramanujan identities
  (see \eqref{rr1} below), generally falls into one of two broad camps.

In the one camp, there are a variety of analytic methods. These
include (but are certainly not limited to) elementary $q$-series
manipulations (as in the proof of the Bailey-Daum summation formula
on page 18 of \cite{GR04}), the use of difference operators (as in
Gasper and Rahman's derivation of a bibasic summation formula
\cite{GR90}), the use of Bailey pairs and WP-Bailey pairs (see, for
example, \cite{AB02, S52, W03}), determinant methods (for example,
\cite{GP09, S17}), constant term methods (such as in \cite[Chap.
4]{A86}), polynomial finitization/generalization of infinite
identities (as in \cite{S03}), an extension of Abel's Lemma (see
\cite[Chap. 7]{AB09}), algorithmic methods such as the
$q$-Zeilberger algorithm (as in \cite{CHM08, K93}), matrix
inversions (including those of Carlitz \cite{C73} and Krattenthaler
\cite{K96}), $q$-Lagrange inversion (see \cite{A75, GS83}), Engel
expansions (see \cite{AKK00,AKP00})
 and several  other classical methods, including
 ``Cauchy's Method" \cite{JS05} and Abel's lemma on
 summation by parts \cite{C07}.

 In the other camp there are a variety of combinatorial or bijective proofs.
 Rather than attempt any classification of the various bijective proofs, we refer the reader to Pak's excellent survey \cite{P06} of bijective methods, with its extensive bibliography.

 In the present paper we use a ``hybrid" method to prove a number of basic hypergeometric identities. The proofs are ``hybrid"  in the sense that we use partition arguments to prove a restricted version of the theorem, and then use analytic methods (in the form of the Identity Theorem) to prove the full version.

We also prove three somewhat unusual summation formulae, and use these to give hybrid proofs of a number of identities due to Ramanujan.
Finally, we use these new summation formulae to give new partition interpretations of the Rogers-Ramanujan identities and the Rogers-Selberg identities.

\section{A Hybrid Proof of the $q$-Binomial Theorem}

In this section we give a hybrid proof of the $q$-Binomial Theorem,
\begin{equation}\label{bintheoeq0}
\sum_{n=0}^{\infty}\frac{(a;q)_nz^n}{(q;q)_n}
=\frac{(az;q)_{\infty}}{(z;q)_{\infty}}.
\end{equation}

\begin{lemma}
Let $k \geq 4$ and $r, s$ be fixed positive integers with $0<r<s<r+s<k$.
For each positive integer $n$ and each integer $m \geq (r+k)n$, let
$A_n(m)$ denote the number of partitions of $m$ with
\begin{itemize}
 \item the part $r$ occurring exactly $n$ times,
 \item distinct parts from  $\{s,s+k,s+2k,\dots, s+(n-1)k\}$,
 \item possibly repeating parts from $\{k,2k,3k,\dots , nk\}$, with the part $nk$ occurring at least once.
  \end{itemize}
Likewise, let $B_n(m)$ denote the number of partitions of $m$ into exactly $n$ parts, with
\begin{itemize}
  \item distinct parts $\equiv r+s (\mod k)$, with the part $r+s$ not appearing,
 \item possibly repeating parts $\equiv r (\mod k)$, with the part $r$ not appearing.
  \end{itemize}
  Then
  \[
  A_n(m)=B_n(m).
  \]
\end{lemma}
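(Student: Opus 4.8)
The plan is to first strip off the mandatory copies of $r$, reducing the claim to a cleaner statement about the ``leftover'' partitions, and then to match the two sides by organizing them according to the number of parts of a distinguished residue type. First I would remove the $n$ copies of $r$ from each partition counted by $A_n(m)$ and subtract $r$ from each of the $n$ parts of every partition counted by $B_n(m)$. In an $A$-partition every part other than the $r$'s is a multiple of $k$ (hence $\geq k>r$) or is $\equiv s\pmod k$ (hence $\geq s>r$), while each part of a $B$-partition is either $\equiv r\pmod k$ with least admissible value $r+k$ or $\equiv r+s\pmod k$ with least admissible value $r+s+k$; so both operations remove exactly $rn$ and are reversible. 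Writing $M=m-rn$, the claim becomes $A'(M)=B'(M)$, where $A'(M)$ counts partitions of $M$ whose parts are multiples of $k$ not exceeding $nk$ (repeating, with $nk$ present) or of the form $s+jk$ with $0\le j\le n-1$ (distinct), and $B'(M)$ counts partitions of $M$ into exactly $n$ parts, each a positive multiple of $k$ (repeating) or of the form $s+jk$ with $j\ge 1$ (distinct). It is convenient to record each part by its type ($\equiv 0$ or $\equiv s$) and its \emph{height}, the quotient by $k$.

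Next I would sort both sides by the number $p$ of parts of type $\equiv s$ and show the generating functions agree sector by sector. Setting $Q=q^k$ and $x=q^s$, the type-$\equiv s$ parts contribute a factor $x^p$ times a sum of $Q^{\text{height}}$ over an appropriate family of subsets, while the type-$\equiv 0$ parts contribute the generating function of a suitable class of partitions. On the $A'$ side the $\equiv s$ heights form a $p$-subset of $\{0,\dots,n-1\}$, giving $x^p Q^{\binom{p}{2}}\qbin{n}{p}{Q}$, and the $\equiv 0$ heights form a partition with all parts $\le n$ and largest part $n$, giving $Q^n/(Q;Q)_n$. On the $B'$ side the $\equiv s$ heights form a $p$-subset of the positive integers, giving $x^p Q^{\binom{p+1}{2}}/(Q;Q)_p$, and the $\equiv 0$ heights form a partition into exactly $n-p$ parts, giving $Q^{n-p}/(Q;Q)_{n-p}$. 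Each of these four facts is elementary (``subtract a staircase'' or ``subtract $1$ from every part''), and crucially none is the $q$-Binomial Theorem, so no circularity arises. After cancelling the common factor $x^p/\big((Q;Q)_p(Q;Q)_{n-p}\big)$, equality of the two sector-$p$ products reduces to the numerical identity $\binom{p}{2}+n=\binom{p+1}{2}+(n-p)$, which is immediate.

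Summing over $p$ (equivalently, recognizing the $A'$ generating function as $Q^n(-x;Q)_n/(Q;Q)_n$ via the finite $q$-binomial expansion of $(-x;Q)_n$) then yields $A'(M)=B'(M)$ for every $M$, and hence $A_n(m)=B_n(m)$ for every $m$. In particular the identity holds for $m\ge (r+k)n$; below that bound both counts vanish, since the smallest partition on either side (namely $n$ copies of $r$ together with the forced part $nk$ on the $A$ side, and $n$ copies of $r+k$ on the $B$ side) has weight exactly $(r+k)n$, so the stated restriction is harmless.

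I expect the main obstacle to be realizing the sector-$p$ matching bijectively rather than through generating functions: the two types interact under conjugation, so one cannot simply conjugate the $\equiv 0$ part and carry the $\equiv s$ part across unchanged. The naive attempt—conjugate the multiset of $\equiv 0$ heights and add the chosen subset elements to $p$ of the resulting parts—fails to keep the new $\equiv s$ heights distinct, since weakly decreasing parts plus an increasing shift need not be strictly monotone. The clean combinatorial mechanism is conjugation of the full height diagram with the $\equiv s$ marks transported to distinct columns, and the single identity $\binom{p}{2}+n=\binom{p+1}{2}+(n-p)$ above is precisely the bookkeeping this transport must satisfy; making the transport explicit and verifying its invertibility is the delicate step, which is why I would present the generating-function computation as the rigorous core.
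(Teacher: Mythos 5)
Your proof is correct and non-circular, but it takes a genuinely different route from the paper's. The paper proves the lemma purely bijectively: it writes an $A_n(m)$-partition as $\sum_{j=1}^n m_j(jk)+\sum_{j=0}^{n-1}\delta_j(jk+s)+n(r)$, applies the elementary rearrangement $\sum_{j=1}^t jy_j=\sum_{j=1}^t\sum_{i=j}^t y_i$, and reads off the resulting $n$ groups as the parts of a $B_n(m)$-partition, with an explicit inverse regrouping; graphically this is exactly the ``$k$-block conjugation'' (distribute the $n$ copies of $r$ to the bottoms of the $n$ columns of width $k$, then conjugate in blocks of width $k$) that you describe in your final paragraph but decline to carry out rigorously. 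Your argument instead strips the $r$'s, splits both collections by the number $p$ of parts $\equiv s \pmod{k}$, and matches generating functions sector by sector, reducing everything to $\binom{p}{2}+n=\binom{p+1}{2}+(n-p)$; the four counting facts you invoke (subset sums over $\{0,\dots,n-1\}$, largest part exactly $n$, $p$ distinct positive parts, exactly $n-p$ parts) are all elementary and independent of the infinite $q$-binomial theorem, so the hybrid proof of the theorem remains non-circular --- indeed the paper itself uses the finite expansion implicitly when Lemma 2 asserts that $(-q^s;q^k)_n(q^{r+k})^n/(q^k;q^k)_n$ generates $A_n(m)$. What the paper's bijection buys: it is the combinatorial engine that is reused (as $k$-block conjugation) in the proofs of Theorems 2--4, and it keeps the combinatorial half of the ``hybrid'' method genuinely bijective, which is the stated point of the paper. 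What your route buys: it sidesteps the delicate bookkeeping of transporting the $\equiv s$ marks under conjugation --- precisely the obstacle you identify --- at the cost of replacing the bijective core with a (still elementary) generating-function verification.
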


\begin{proof}
We will exhibit injections between the two sets of partitions. We may represent a partition of $m$ of the type counted by $A_n(m)$ as
\[
m = \sum_{j=1}^n m_j (jk)+\sum_{j=0}^{n-1} \delta_j(jk+s)+n(r),
\]
where the parts are displayed in parentheses, and the multiplicities satisfy $m_n\geq 1$, $m_j\geq 0$ for $1\leq j \leq n-1$, and $\delta_j \in \{0,1\}$. Upon applying the identity $\sum_{j=1}^t jy_j=\sum_{j=1}^t\sum_{i=j}^ty_i$ to the sums containing $j$, we get
\begin{equation*}
m =\left(m_n k+\delta_{n-1}s+r \right)
+
\sum_{j=1}^{n-1}\left( k\sum_{i=j}^n m_i + k\sum_{i=j}^{n-1}\delta_i +\delta_{j-1}s +r\right).
\end{equation*}
Here the parts of the new partition are displayed inside parentheses, and it is not difficult to recognize this partition as one of the type counted by $B_n(m)$.

On the other hand, we may represent a partition of $m$ of the type counted by $B_n(m)$ as
\[
m=\sum_{j=1}^n\left(p_j k +\delta_j s +r \right)
\]
with $1\leq p_1\leq p_2 \leq \dots \leq p_n$, $\delta_j\in\{0,1\}$, and if $\delta_i=\delta_{i+1}=1$, then $p_i<p_{i+1}$. We also label the $p_j$ so that if $p_i k+\delta_i s +r>p_j k+\delta_j s+r$, then $i>j$ (in particular, this labeling means $p_{j+1}-p_j-\delta_j\geq 0$ for $1 \leq j \leq n-1$). We rewrite the above sum for $m$ as
{\allowdisplaybreaks\begin{align*}
m&=n[r]+\delta_n[s]\\
&+(p_n-p_{n-1}-\delta_{n-1})[k]+\delta_{n-1}[k+s]\\
&+(p_{n-1}-p_{n-2}-\delta_{n-2})[2k]+\delta_{n-2}[2k+s]\\
&+(p_{n-2}-p_{n-3}-\delta_{n-3})[3k]+\delta_{n-3}[3k+s]\\
&\,\,\vdots\\
&+(p_3-p_2-\delta_2)[(n-2)k]+\delta_2[(n-2)k+s]\\
&+(p_2-p_1-\delta_1)[(n-1)k]+\delta_1[(n-1)k+s]\\
&+p_1[nk].
\end{align*}}
This is a partition of the type counted by $A_n(m)$, where this time the parts are displayed inside $[\,]$'s.

It is not difficult to see that these transformations give injections between the two sets of partitions and the result is proved.
\end{proof}

Graphically, we may describe these transformations as follows. In
each case, we start with the usual Ferrers diagram of the partition.

It can be seen that the largest part in a partition counted by $A_n(m)$ has size $nk$, so such a partition can be regarded as consisting of $n$ columns, each of width $k$. The first step is to distribute the $n$ parts of size $r$ so that one $r$ is at the bottom of each of these $n$ columns. We then form a new partition whose parts are the columns of this intermediate partition (we might call it \emph{the $k$-block conjugate} of this partition). This new partition is easily seen to be a partition of the type counted by $B_n(m)$.

If we start with a partition of the type counted by $B_n(m)$, the first step is to strip away a part of size $r$ from each of the $n$ parts. We then form the $k$-block conjugate of the remaining partition, add in the $n$ parts of size $r$, and what results is a partition of the type counted by $A_n(m)$.

We illustrate these transformations with two partitions of $26k+4s+5r$ (with $n=5$). The partition with parts
$5k, 4k+s, 4k, 4k, 3k+s, 2k, 2k, k+s, k, s, r,r,r,r,r$ is one of those counted by $A_5(26k+4s+5r)$. Its Ferrers diagram follows, and we show how it is transformed into the partition with parts $9k+s+r, 7k+s+r, 5k+r, 4k+r+s$ and $k+r+s$, which is a partition of the type counted by $B_5(26k+4s+5r)$.

\begin{center}
\begin{minipage}{4.9in}
\setlength{\unitlength}{0.0008cm}
\renewcommand{\dashlinestretch}{30}
\begin{picture}(7500, 5000)(0000, 0)
\thicklines
%
\path(0000,0000)(450,0000)
\path(0000,0300)(450,0300)
\path(0000,0600)(450,0600)
\path(0000,0900)(450,0900)
\path(0000,1200)(450,1200)
\path(0000,1500)(825,1500)
\path(0000,1800)(1500,1800)
\path(0000,2100)(2325,2100)
\path(0000,2400)(3000,2400)
\path(0000,2700)(3000,2700)
\path(0000,3000)(5325,3000)
\path(0000,3300)(6000,3300)
\path(0000,3600)(6000,3600)
\path(0000,3900)(6825,3900)
\path(0000,4200)(7500,4200)
\path(0000,4500)(7500,4500)
%
%
\path(7500,4200)(7500,4500)
\path(6825,3900)(6825,4200)
\path(6000,3300)(6000,4500)
\path(5325,3000)(5325,3300)
\path(4500,3000)(4500,4500)
\path(3000,2400)(3000,4500)
\path(2325,2100)(2325,2400)
\path(1500,1800)(1500,4500)
\path(825,1500)(825,1800)
\path(0450,1500)(0450,0000)
\path(0000,4500)(0000,0000)
\put(1800,2150){$s$} \put(0300,1550){$s$}
\put(0520,1000){$\vector(3,2){1450}$}
\put(0520,700){$\vector(3,2){3250}$}
\put(1190,400){$\vector(3,2){3800}$}
\put(520,400){$\vector(2,0){650}$}
\put(1390,100){$\vector(3,2){4890}$}
\put(6300,3350){$\vector(0,1){500}$}
\put(520,100){$\vector(2,0){850}$} \put(0150,1250){$r$}
\put(0150,950){$r$} \put(0150,650){$r$} \put(0150,0350){$r$}
\put(0150,0050){$r$}
\put(0600,1800){$k$}
\put(0600,2100){$k$}
\put(0600,2400){$k$}
\put(0600,2700){$k$}
\put(0600,3000){$k$}
\put(0600,3300){$k$}
\put(0600,3600){$k$}
\put(0600,3900){$k$}
\put(0600,4200){$k$}
\put(2100,2400){$k$}
\put(2100,2700){$k$}
\put(2100,3000){$k$}
\put(2100,3300){$k$}
\put(2100,3600){$k$}
\put(2100,3900){$k$}
\put(2100,4200){$k$}
\put(3600,3000){$k$}
\put(3600,3300){$k$}
\put(3600,3600){$k$}
\put(3600,3900){$k$}
\put(3600,4200){$k$}
\put(4800,3050){$s$}
\put(5100,3300){$k$}
\put(5100,3600){$k$}
\put(5100,3900){$k$}
\put(5100,4200){$k$}
\put(6300,3950){$s$}
\put(6600,4200){$k$}
%
%
%
\end{picture}
\begin{ca}
\label{partition1}
Place one part of size $r$ at the bottom of each of the 5 columns of width $k$.
\end{ca}
\end{minipage}
\end{center}

\begin{center}
\begin{minipage}{4.9in}
\setlength{\unitlength}{0.0008cm}
\renewcommand{\dashlinestretch}{30}
\begin{picture}(7500, 3600)(0000, 0)
\thicklines
%
\path(0000,000)(450,000)
\path(0000,300)(825,300)
\path(0000,600)(1950,600)
\path(0000,900)(2325,900)
\path(0000,1200)(3000,1200)
\path(0000,1500)(3450,1500)
\path(4500,1500)(4950,1500)
\path(0000,1800)(5325,1800)
\path(0000,2100)(6000,2100)
\path(0000,2400)(6450,2400)
\path(0000,2700)(6825,2700)
\path(0000,3000)(7500,3000)
\path(0000,3300)(7500,3300)
%
%
\path(7500,3000)(7500,3300)
\path(6825,2700)(6825,3000)
\path(6450,2400)(6450,2700)
\path(6000,2100)(6000,3300)
\path(5325,1800)(5325,2100)
\path(4500,1800)(4500,3300)
\path(3450,1500)(3450,1800)
\path(4500,1500)(4500,1800)
\path(4950,1500)(4950,1800)
\path(3000,1200)(3000,3300)
\path(2325,900)(2325,1200)
\path(1950,600)(1950,900)
\path(1500,600)(1500,3300)
\path(825,300)(825,600)
\path(0450,300)(0450,000)
\path(0000,3300)(0000,000)
\put(1800,950){$s$}
\put(0300,350){$s$}
\put(0150,050){$r$}
\put(1650,650){$r$}
\put(3150,1550){$r$}
\put(4650,1550){$r$}
\put(6150,2450){$r$}
\put(0600,600){$k$}
\put(0600,900){$k$}
\put(0600,1200){$k$}
\put(0600,1500){$k$}
\put(0600,1800){$k$}
\put(0600,2100){$k$}
\put(0600,2400){$k$}
\put(0600,2700){$k$}
\put(0600,3000){$k$}
\put(2100,1200){$k$}
\put(2100,1500){$k$}
\put(2100,1800){$k$}
\put(2100,2100){$k$}
\put(2100,2400){$k$}
\put(2100,2700){$k$}
\put(2100,3000){$k$}
\put(3600,1800){$k$}
\put(3600,2100){$k$}
\put(3600,2400){$k$}
\put(3600,2700){$k$}
\put(3600,3000){$k$}
\put(4800,1850){$s$}
\put(5100,2100){$k$}
\put(5100,2400){$k$}
\put(5100,2700){$k$}
\put(5100,3000){$k$}
\put(6300,2750){$s$}
\put(6600,3000){$k$}
%
%
%
\end{picture}
\begin{ca}
\label{partition2a}
Now form the $k$-block conjugate of this partition.
\end{ca}
\end{minipage}
\end{center}

\begin{center}
\begin{minipage}{4.9in}
\setlength{\unitlength}{0.0008cm}
\renewcommand{\dashlinestretch}{30}
\begin{picture}(7500, 2000)(0000, 0)
\thicklines
%
\path(0000,000)(2775,000)
\path(0000,300)(7275,300)
\path(0000,600)(7950,600)
\path(0000,900)(11775,900)
\path(0000,1200)(14775,1200)
\path(0000,1500)(14775,1500)

%
%
\path(14775,1500)(14775,1200)
\path(13500,1500)(13500,1200)
\path(12000,1500)(12000,1200)
\path(11775,1200)(11775,900)
\path(10500,1500)(10500,900)
\path(9000,1500)(9000,900)
\path(7950,900)(7950,600)
\path(7500,1500)(7500,600)
\path(7275,600)(7275,300)
\path(6000,1500)(6000,300)
\path(4500,1500)(4500,300)
\path(3000,1500)(3000,300)
\path(2775,00)(2775,300)
\path(1500,1500)(1500,000)
\path(0000,1500)(0000,000)
\put(600,00){$k$}
\put(600,300){$k$}
\put(600,600){$k$}
\put(600,900){$k$}
\put(600,1200){$k$}

\put(1700,50){$r+s$}
\put(2100,300){$k$}
\put(2100,600){$k$}
\put(2100,900){$k$}
\put(2100,1200){$k$}

\put(3600,300){$k$}
\put(3600,600){$k$}
\put(3600,900){$k$}
\put(3600,1200){$k$}

\put(5100,300){$k$}
\put(5100,600){$k$}
\put(5100,900){$k$}
\put(5100,1200){$k$}

\put(6200,350){$r+s$}
\put(6600,600){$k$}
\put(6600,900){$k$}
\put(6600,1200){$k$}

\put(7650,650){$r$}
\put(8100,900){$k$}
\put(8100,1200){$k$}

\put(9600,900){$k$}
\put(9600,1200){$k$}

\put(10700,950){$r+s$}
\put(11100,1200){$k$}

\put(12600,1200){$k$}
\put(13700,1250){$r+s$}

\end{picture}
\begin{ca}
\label{partition3a}
This is a partition of the type counted by $B_5(26k+4s+5r)$.
\end{ca}
\end{minipage}
\end{center}
These steps are easily seen to be reversible.

\begin{lemma}
Let $k\geq 4$ be a fixed integer and let $r$ and $s$ be fixed integers such that $0<r<s<r+s<k$. Then
\begin{equation}\label{rskeq}
\sum_{n=0}^{\infty} \frac{(-q^s;q^k)_n\left(q^{r+k}\right)^n}{(q^k;q^k)_n}
=\frac{(-q^{s+r+k};q^k)_{\infty}}{(q^{r+k};q^k)_{\infty}}.
\end{equation}
\end{lemma}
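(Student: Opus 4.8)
The plan is to read \eqref{rskeq} as the generating-function incarnation of the bijection in the previous lemma. I would show that the left-hand side equals $\sum_{n\ge 0}\sum_m A_n(m)q^m$ and that the right-hand side equals $\sum_{n\ge 0}\sum_m B_n(m)q^m$; the equality $A_n(m)=B_n(m)$ then closes the argument termwise in $n$.

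First I would check that the $n$-th summand on the left is exactly $\sum_m A_n(m)q^m$. Reading off the three conditions defining $A_n(m)$: the part $r$ occurring exactly $n$ times contributes $q^{rn}$; the distinct parts chosen from $\{s,s+k,\dots,s+(n-1)k\}$ contribute $\prod_{j=0}^{n-1}(1+q^{s+jk})=(-q^s;q^k)_n$; and the possibly-repeating parts from $\{k,2k,\dots,nk\}$ with $nk$ forced to appear contribute $\frac{q^{nk}}{1-q^{nk}}\prod_{j=1}^{n-1}\frac{1}{1-q^{jk}}=\frac{q^{nk}}{(q^k;q^k)_n}$. Multiplying these and using $q^{rn+nk}=(q^{r+k})^n$ gives $\frac{(-q^s;q^k)_n(q^{r+k})^n}{(q^k;q^k)_n}$, the $n$-th term on the left (the $n=0$ case being the empty partition, contributing $1$).

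Next I would expand the right-hand side as a generating function for the partitions counted by $B_n(m)$, summed over $n$. The factor $(-q^{s+r+k};q^k)_\infty=\prod_{j\ge 1}(1+q^{(r+s)+jk})$ generates distinct parts $\equiv r+s\pmod{k}$ with $r+s$ excluded, and $1/(q^{r+k};q^k)_\infty=\prod_{j\ge 1}1/(1-q^{r+jk})$ generates repeating parts $\equiv r\pmod{k}$ with $r$ excluded; grouping the expansion by the total number of parts $n$ yields $\sum_{n\ge 0}\sum_m B_n(m)q^m$. Since any part $\equiv r$ is at least $r+k$ and any part $\equiv r+s$ is at least $r+s+k>r+k$, a $B_n$-partition has weight at least $(r+k)n$, so $B_n(m)=0$ for $m<(r+k)n$; the same threshold holds for $A_n(m)$ because $r$ is used $n$ times and $nk$ at least once. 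Hence the restriction $m\ge(r+k)n$ in the previous lemma costs nothing, and $A_n(m)=B_n(m)$ for every $n$ and every $m\ge 0$. Summing this equality over $n$ and $m$ identifies the two sides of \eqref{rskeq}.

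I expect the delicate points to be bookkeeping rather than conceptual. The step needing the most care is matching the condition ``the largest part $nk$ occurs at least once'' to the single factor $q^{nk}$, that is, seeing why the forced largest part is exactly what combines with $q^{rn}$ to produce $(q^{r+k})^n$, and checking that the right-hand product decomposes cleanly by part count so that the coefficient of $q^m$ is genuinely $\sum_n B_n(m)$. The interchange of the sums over $m$ and $n$ is legitimate in the ring of formal power series in $q$, since for each fixed $m$ only the finitely many $n\le m/(r+k)$ contribute.
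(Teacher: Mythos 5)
Your proposal is correct and follows essentially the same route as the paper: identify each summand on the left as the generating function $\sum_m A_n(m)q^m$, identify the infinite product on the right as the generating function for the $B_n$-partitions summed over $n$, and invoke the combinatorial lemma $A_n(m)=B_n(m)$ to equate the two. The paper is terser (it sums $B_n(m)$ over $n$ into a single count $B(m)$ and declares its product generating function ``clear''), whereas you spell out the threshold $m\ge(r+k)n$, the clean decomposition of the product by part count, and the formal-power-series interchange of sums; these are exactly the bookkeeping details the paper leaves implicit.
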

\begin{proof}
The generating function for the sequence $A_n(m)$ is given by
\[
\frac{(-q^s;q^k)_n\left(q^{r+k}\right)^n}{(q^k;q^k)_n}=\sum_{m\geq (r+k)n}A_n(m)q^m.
\]
Thus
{\allowdisplaybreaks\begin{align*}
1+\sum_{n=1}^{\infty} \frac{(-q^s;q^k)_n\left(q^{r+k}\right)^n}{(q^k;q^k)_n}
&=1+\sum_{n=1}^{\infty}\sum_{m\geq (r+k)n}A_n(m)q^m\\
&=1+\sum_{n=1}^{\infty}\sum_{m\geq (r+k)n}B_n(m)q^m\\
&=1+\sum_{m\geq (r+k)}B(m)q^m,
\end{align*}}
where $B(m)$ counts the number of partitions of $m$ with
\begin{itemize}
  \item distinct parts $\equiv r+s (\mod k)$, with the part $r+s$ not appearing,
 \item possibly repeating parts $\equiv r (\mod k)$, with the part $r$ not appearing.
  \end{itemize}
It is  clear that
\[
1+\sum_{m\geq (r+k)}B(m)q^m=\frac{(-q^{s+r+k};q^k)_{\infty}}{(q^{r+k};q^k)_{\infty}},
\]
and the result now follows.
\end{proof}

We now give a proof of the $q$-Binomial Theorem.
\begin{theorem}
Let $a$, $z$ and $q$ be complex numbers with $|z|, |q|<1$. Then
\begin{equation}\label{bintheoeq}
\sum_{n=0}^{\infty}\frac{(a;q)_nz^n}{(q;q)_n}
=\frac{(az;q)_{\infty}}{(z;q)_{\infty}}.
\end{equation}
\end{theorem}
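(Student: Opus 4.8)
The plan is to recognize Lemma 2 (equation \eqref{rskeq}) as a special evaluation of \eqref{bintheoeq} and then to free the parameters one at a time using the Identity Theorem. Indeed, replacing $q$ by $q^k$, $a$ by $-q^s$, and $z$ by $q^{r+k}$ in \eqref{bintheoeq} yields exactly \eqref{rskeq}; thus Lemma 2 asserts that \eqref{bintheoeq} holds, with base $q^k$, at the point $a=-q^s$, $z=q^{r+k}$, for every admissible integer triple $(k,r,s)$ and every $q$ with $|q|<1$. Writing $p=q^k$ for the base, this says the identity holds at $a=-p^{s/k}$, $z=p^{\,1+r/k}$, and since $|z|<|p|<1$ this $z$ lies in the domain of convergence. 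The first task is to verify that, for fixed base and fixed $z$ with $|z|,|p|<1$, both sides of \eqref{bintheoeq} are entire functions of $a$, that for fixed $a$ and base both sides are analytic in $z$ on $|z|<1$ (the left side is a power series in $z$ of radius at least $1$, and the denominator $(z;q)_\infty$ of the right side has no zeros there), and that both sides are analytic in the base on $|q|<1$. These are routine locally-uniform-convergence estimates.

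With analyticity in hand, I would bootstrap in three stages. Fix a real base $p_0\in(0,1)$ and a rational $\rho\in(0,\tfrac12)$, and write $\rho=a_0/b_0$ in lowest terms, so that $r=a_0t$, $k=b_0t$ realize $r/k=\rho$ for each positive integer $t$. Holding $\rho$ fixed keeps $z=p_0^{\,1+\rho}$ fixed, while the admissible range $r<s<k-r$ forces $\sigma:=s/k\in(\rho,1-\rho)$; letting $t\to\infty$ shows the set of attainable $\sigma$ is dense in $(\rho,1-\rho)$. Hence, for this fixed $z$ and base $p_0$, the two sides of \eqref{bintheoeq} agree at $a=-p_0^{\,\sigma}$ for a set of $a$-values accumulating in the interval $(-1,0)\subset\mathbb{C}$, and the Identity Theorem (both sides entire in $a$) upgrades this to agreement for all $a\in\mathbb{C}$.

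Carrying out the previous stage for every rational $\rho\in(0,\tfrac12)$ gives, for each such $\rho$ and every $a\in\mathbb{C}$, the identity at $z=p_0^{\,1+\rho}$. Now fix $a\in\mathbb{C}$ arbitrary: as $\rho$ varies these $z$-values accumulate in $(p_0^{3/2},p_0)\subset\{|z|<1\}$, so the Identity Theorem in $z$ yields \eqref{bintheoeq} for all $a\in\mathbb{C}$ and all $|z|<1$, with base $p_0$. Finally, fixing such $a$ and $z$ and applying the Identity Theorem once more in the base variable---the two sides now agree for every real $p_0\in(0,1)$, a set accumulating inside the disk $|q|<1$---extends the identity to all complex $q$ with $|q|<1$, completing the proof. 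The main obstacle is precisely that Lemma 2 couples $a$, $z$, and the base through the single quantity $q$, so the identity is known only along special curves; the crux is the arithmetic observation that fixing the ratio $r/k$ pins down $z$ while allowing $s/k$ to sweep a dense set, which is exactly what makes a one-variable application of the Identity Theorem possible.
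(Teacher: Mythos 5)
Your proof is correct, and it rests on the same two pillars as the paper's own argument: Lemma 2 (equation \eqref{rskeq}) as the combinatorially proved seed, and repeated one-variable applications of the Identity Theorem. The mechanics, however, are genuinely different. The paper decouples the parameters by substituting roots of the base: it first applies \eqref{rskeq} with $(r,s,k)$ rescaled to $(r,sm,km)$ and replaces $q$ by $q^{1/m}$, producing agreement points $z=q^{r/m}$ accumulating at $z=1$, which gives the continuation in $z$ first; it then replaces $q$ by $q^{1/k}$ with $s=2$, producing $a=-q^{2/k}$ accumulating at $a=-1$, which gives the continuation in $a$. Because these root substitutions work for an arbitrary complex base, the paper finishes after only two applications of the Identity Theorem, with $q$ complex throughout and no continuation in the base ever needed. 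You instead keep the base fixed and \emph{real}, and generate accumulation sets arithmetically: fixing the ratio $r/k=\rho$ pins $z=p_0^{1+\rho}$ while $s/k$ sweeps a set dense in $(\rho,1-\rho)$, so you continue in $a$ first, then in $z$ (by varying $\rho$), and finally you need a third application of the Identity Theorem, in the base variable, to pass from $p_0\in(0,1)$ to complex $q$. What your route buys is that you never introduce fractional powers of a complex number (no choice of branches or roots, which the paper must fix at each substitution); what it costs is the extra analyticity verification in $q$ and the third continuation step. Your supporting claims all check out: both sides are indeed entire in $a$ and analytic in $z$ and in $q$ on the unit disk (the coefficient bound $|(a;q)_n|\le(-|a|;|q|)_\infty$ and the nonvanishing of $(q;q)_n$ and $(z;q)_\infty$ give the locally uniform convergence), and the density of attainable $\sigma=s/k$ in $(\rho,1-\rho)$ as $t\to\infty$ is sound, so the argument goes through as written.
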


\begin{proof}
By \eqref{rskeq}, if $k$ and $m$ are positive integers with $k\geq 4$, and $r$ and $s$ are integers with $0<r<sm<sm+r<mk$, then
\begin{equation*}
\sum_{n=0}^{\infty} \frac{(-q^{sm};q^{km})_n\left(q^{r+km}\right)^n}{(q^{km};q^{km})_n}
=\frac{(-q^{sm+r+km};q^{km})_{\infty}}{(q^{r+km};q^{km})_{\infty}}.
\end{equation*}
Fix an $m$-th root of $q$, denoted $q^{1/m}$, and replace $q$ with $q^{1/m}$ to get
\begin{equation*}
\sum_{n=0}^{\infty} \frac{(-q^{s};q^{k})_n\left(q^{r/m+k}\right)^n}{(q^{k};q^{k})_n}
=\frac{(-q^{s+r/m+k};q^{k})_{\infty}}{(q^{r/m+k};q^{k})_{\infty}}.
\end{equation*}
Now let $m$ take the values $1,2,3,\dots$, so that the identity
\begin{equation}\label{kszeq}
\sum_{n=0}^{\infty} \frac{(-q^{s};q^{k})_n\left(zq^{k}\right)^n}{(q^{k};q^{k})_n}
=\frac{(-q^{s+k}z;q^{k})_{\infty}}{(zq^{k};q^{k})_{\infty}}
\end{equation}
holds for $z\in \{q^{r/m}:m\geq 1\}$. By continuity this identity also holds for $z=1$, the limit of this sequence. Hence, by the Identity Theorem, \eqref{kszeq} holds for $|z|<|q|^{-k}$. Replace $z$ with $z/q^k$ and we get that
\begin{equation}\label{kszeq2}
\sum_{n=0}^{\infty} \frac{(-q^{s};q^{k})_nz^n}{(q^{k};q^{k})_n}
=\frac{(-q^{s}z;q^{k})_{\infty}}{(z;q^{k})_{\infty}}
\end{equation}
holds for $|z|<1$ and $1<s<k$.

Next, fix a $k$-th root of $q$, denoted $q^{1/k}$, replace $q$ with $q^{1/k}$ in \eqref{kszeq2} to get that
\begin{equation}\label{szeq}
\sum_{n=0}^{\infty} \frac{(-q^{s/k};q)_nz^n}{(q;q)_n}
=\frac{(-q^{s/k}z;q)_{\infty}}{(z;q)_{\infty}}.
\end{equation}
Set $s=2$ and let $k$ take the values $4, 5, 6, \dots$ to get that
\begin{equation}\label{azeq}
\sum_{n=0}^{\infty} \frac{(a;q)_nz^n}{(q;q)_n}
=\frac{(az;q)_{\infty}}{(z;q)_{\infty}}
\end{equation}
holds for $a\in \{-q^{2/k}:k\geq 4\}$ and $|z|<1$. By continuity, \eqref{azeq} also holds for $a=-1$, the limit point of this sequence. Thus, again by the Identity Theorem, \eqref{azeq} holds for all $a \in \mathbb{C}$ and all $z\in \mathbb{C}$ with $|z|<1$.
\end{proof}

\section{Some Preliminary Summation Formulae}
Before coming to the proof of the next identities, we prove some
preliminary lemmas.

\begin{lemma}\label{l1}
Let $|q|<1$ and $b \not=-q^{-n}$ for any positive integer $n$. Then if $m$ is any positive integer,
\begin{equation}\label{lid2}
\sum_{0\leq a_1\leq a_2\leq \dots \leq a_n}\frac{q^{m(a_1+a_2+\dots +
a_n)}} {\prod_{j=0}^{n-1}\prod_{k=1}^{m+1}(1+bq^{j(m+1)+k+a_{j+1}})}
=\frac{1}{(q^m;q^m)_n(-bq;q)_{mn}},
\end{equation}
where the sum is over all $n$-tuples $\{a_1,\dots, a_n\}$  of integers that satisfy the stated inequality.
\end{lemma}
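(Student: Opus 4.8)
The plan is to reduce the entire $n$-fold sum to $n$ successive applications of a single elementary telescoping identity, summing the variables from the outside inward (first $a_n$, then $a_{n-1}$, and so on down to $a_1$). The engine is the one-variable lemma: for a positive integer $M$, an integer $e$, and $c\ge 0$,
\[
\sum_{a=c}^{\infty}\frac{q^{Ma}}{\prod_{k=1}^{M+1}(1+bq^{a+k+e})}=\frac{1}{1-q^M}\cdot\frac{q^{Mc}}{\prod_{k=1}^{M}(1+bq^{c+k+e})}.
\]
I would prove this by exhibiting the antidifference $G(a)=\frac{1}{1-q^M}\,\frac{q^{Ma}}{\prod_{k=1}^M(1+bq^{a+k+e})}$ and checking that $G(a)-G(a+1)$ equals the summand; the decisive cancellation is that in $(1+bq^{a+e+M+1})-q^M(1+bq^{a+e+1})$ the two $b$-terms coincide, leaving exactly $1-q^M$. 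Since $|q|<1$ and $b\ne -q^{-j}$ for every positive integer $j$, no denominator vanishes and $G(a)\to 0$ as $a\to\infty$, so the series telescopes to $G(c)$.

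Next I would iterate, proving by downward induction on $t$ (starting from $t=n$, which is the left-hand side of \eqref{lid2}) that after summing out the top $n-t$ variables $a_n,\dots,a_{t+1}$ one reaches the state
\[
\frac{1}{\prod_{\ell=1}^{n-t}(1-q^{\ell m})}\sum_{0\le a_1\le\dots\le a_t}\frac{q^{m(a_1+\dots+a_{t-1})+(n-t+1)m\,a_t}}{\prod_{j=0}^{t-1}\prod_{k=1}^{m+1}(1+bq^{j(m+1)+k+a_{j+1}})\prod_{k=1}^{(n-t)m}(1+bq^{t(m+1)+k+a_t})}.
\]
In the inductive step one sums over $a_t$ from $a_{t-1}$ to $\infty$. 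The crucial observation is that block $j=t-1$ together with the trailing partial block attached to $a_t$ occupy the \emph{consecutive} exponents from $(t-1)(m+1)+a_t+1$ up to $t(m+1)+(n-t)m+a_t$, that is, exactly $(n-t+1)m+1$ factors, while the numerator carries $q^{(n-t+1)m\,a_t}$. Thus the telescoping lemma applies with $M=(n-t+1)m$; applying it contributes the factor $1/(1-q^{(n-t+1)m})$, raises the exponent on $a_{t-1}$ from $m$ to $(n-t+2)m$, and drops the single top factor so that the surviving denominator is precisely the state-$(t-1)$ expression.

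Carrying this down to the final sum over $a_1$ (with $a_0:=0$): at $t=1$ the remaining single sum is over $a_1\ge 0$ with numerator $q^{nm\,a_1}$ and denominator a run of $nm+1$ consecutive factors beginning at exponent $a_1+1$. One more application of the lemma ($M=nm$, $c=0$) gives $\frac{1}{1-q^{nm}}\cdot\frac{1}{(-bq;q)_{nm}}$, and together with the accumulated prefactor $\frac{1}{\prod_{\ell=1}^{n-1}(1-q^{\ell m})}$ this is exactly $\frac{1}{(q^m;q^m)_n(-bq;q)_{nm}}$, as claimed.

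The main obstacle is the exponent bookkeeping in the inductive step: one must verify carefully that the two groups of factors involving $a_t$ always merge into one unbroken run of \emph{exactly} $(n-t+1)m+1$ consecutive factors and that the numerator exponent on $a_t$ is \emph{exactly} $(n-t+1)m$, so that $M$ and $M+1$ match the hypotheses of the telescoping lemma at every stage. A secondary but necessary point is justifying the iterated summation: absolute convergence (from $|q|<1$ together with the denominators being bounded away from $0$, guaranteed by $b\ne -q^{-j}$) permits summing the variables one at a time in the stated order.
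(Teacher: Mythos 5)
Your proposal is correct and follows essentially the same route as the paper: the one-variable telescoping identity you state is precisely the paper's key summation formula (its equation \eqref{telesum}, with $M=pm$), applied in the same order ($a_n$ first, then $a_{n-1}$, down to $a_1$), with the same merging of the $j=t-1$ block and the residual factors into a single run of $(n-t+1)m+1$ consecutive exponents. The only differences are presentational: you package the iteration as an explicit downward induction with a closed-form intermediate state and verify the antidifference directly, where the paper splits the summand into a telescoping difference and argues the continuation informally, and you additionally make the convergence/rearrangement justification explicit.
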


\begin{proof}
We rewrite the left side of \eqref{lid2} as the nested sum
\begin{multline}\label{nestedsum}
\sum_{a_1\geq0}\frac{q^{ma_1}}{\prod_{k=1}^{m+1}(1+b q^{k+a_1})}
\sum_{a_2\geq a_1}\frac{q^{ma_2}}{\prod_{k=1}^{m+1}(1+b q^{(m+1)+k+a_2})}\\
\dots
\sum_{a_{n-1}\geq a_{n-2}}\frac{q^{ma_{n-1}}}{\prod_{k=1}^{m+1}(1+b q^{(n-2)(m+1)+k+a_{n-1}})}\\
\sum_{a_{n}\geq a_{n-1}}\frac{q^{ma_{n}}}{\prod_{k=1}^{m+1}(1+b q^{(n-1)(m+1)+k+a_n})}
\end{multline}

Next, we note that if $p \geq 1$ is an integer, and none of the denominators following vanish, that
\begin{multline}\label{telesum}
\sum_{a_{i}\geq a_{i-1}}\frac{q^{pma_{i}}}{\prod_{k=1}^{mp+1}(1+c q^{k+a_i})}\\=
\frac{1}{1-q^{pm}}\sum_{a_{i}\geq a_{i-1}}\bigg [\frac{q^{pma_{i}}}{\prod_{k=1}^{mp}(1+c q^{k+a_i})}-\frac{q^{pm(a_{i}+1)}}{\prod_{k=2}^{mp+1}(1+c q^{k+a_i})}\bigg ]\\
=\frac{1}{1-q^{pm}}\frac{q^{pma_{i-1}}}{\prod_{k=1}^{mp}(1+c q^{k+a_{i-1}})},
\end{multline}
since the second sum telescopes. We now apply this result (with $p=1$) to the innermost sum at \eqref{nestedsum} to get that this sum has the value
\[
\frac{q^{ma_{n-1}}}{(1-q^m)\prod_{k=1}^{m}(1+ bq^{(n-1)(m+1)+k+a_{n-1}})},
\]
so that the next innermost sum at \eqref{nestedsum} becomes
\[
\sum_{a_{n-1}\geq
a_{n-2}}\frac{q^{2ma_{n-1}}}{(1-q^m)\prod_{k=1}^{2m+1}(1+
bq^{(n-2)(m+1)+k+a_{n-1}})}.
\]
We apply \eqref{telesum} again, this time with $p=2$, to get that this sum has value
\[
\frac{q^{2ma_{n-2}}}{(1-q^m)(1-q^{2m})\prod_{k=1}^{2m}(1+ bq^{(n-2)(m+1)+k+a_{n-2}})}.
\]
This now results in the third innermost sum becomes
\begin{equation*}
\sum_{a_{n-2}\geq a_{n-3}}\frac{1}{(q^m;q^m)_2}
\frac{q^{3ma_{n-2}}}{\prod_{k=1}^{3m+1}(1+
bq^{(n-3)(m+1)+k+a_{n-2}})}.
\end{equation*}
This process can be continued, so that after $n-1$ steps, the left side of \eqref{nestedsum} equals
\begin{multline}\label{lilastsum}
\sum_{a_1\geq 0}\frac{q^{mna_1}}{(q^m;q^m)_{n-1}\prod_{k=1}^{nm+1}(1+ bq^{k+a_{1}})}\\
= \frac{q^{mn (0)}}{(q^m;q^m)_{n-1}(1-q^{nm})\prod_{k=1}^{nm}(1+
bq^{k+0})} =\frac{1}{(q^m;q^m)_n(-bq;q)_{nm}},
\end{multline}
giving the result.
\end{proof}

\begin{lemma}\label{l2}
Let $|q|<1$ and $b \not=-q^{-n}$ for any positive integer $n$. Then
if $m$ is any positive integer,
\begin{equation}\label{lid3}
\sum_{0\leq a_1\leq a_2\leq \dots \leq
a_n}^{'}\frac{q^{m(a_1+a_2+\dots +
a_n)}}{\prod_{j=0}^{n-1}\prod_{k=1}^{m+1}(1+bq^{jm+k+a_{j+1}})}
=\frac{1}{(q^m;q^m)_n(-bq;q)_{mn}},
\end{equation}
where the sum is over all $n$-tuples $\{a_1,\dots, a_n\}$  of
integers that satisfy the stated inequality, and the $\sum^{'}$
notation means that if $a_i=a_{i-1}$ for any $i$, then the factor
$1+b q^{(i-1)m+m+1+a_{i-1}}=1+b q^{im+1+a_i}$ occurs just once in
any product.
\end{lemma}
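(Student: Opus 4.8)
The plan is to follow the telescoping argument used for Lemma~\ref{l1} almost verbatim, the one new feature being the overlap of consecutive blocks that the $\sum'$ convention is engineered to absorb. First I would rewrite the left side of \eqref{lid3} as the nested sum over $a_1\le a_2\le\dots\le a_n$ exactly as in \eqref{nestedsum}, and record the single structural difference from \eqref{lid2}: because the block attached to $a_{j+1}$ now starts at exponent $jm+1+a_{j+1}$ rather than $j(m+1)+1+a_{j+1}$, the top factor of the $a_{i-1}$-block and the bottom factor of the $a_i$-block can coincide. A short check shows they coincide \emph{only} when $a_{i-1}=a_i$, and then in exactly one factor (for any larger jump in index or in value the exponents separate by at least $1$). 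This is precisely the factor that $\sum'$ instructs us to count once, so the denominator of the primed summand is the full product off the diagonal and is missing exactly one factor on each diagonal $a_{i-1}=a_i$.

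Next I would carry out the innermost sum over $a_n$ via \eqref{telesum} with $p=1$ and $c=bq^{(n-1)m}$, but with the term $a_n=a_{n-1}$ treated separately to implement the $\sum'$ convention. Writing the clean inner sum as $I$, its diagonal term as $d$, letting $f_k=1+bq^{(n-1)m+k+a_{n-1}}$ for $k=1,\dots,m+1$, and noting that the top factor $g_{m+1}$ of the $a_{n-1}$-block equals $f_1$, the contribution to the next level is $\frac{q^{m a_{n-1}}}{g_1\cdots g_m}\big[\frac{I-d}{g_{m+1}}+d\big]$, since off the diagonal the full $a_{n-1}$-block is present while on the diagonal the shared factor is dropped. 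The heart of the matter is the identity that emerges after inserting the telescoped value of $I$: with $\beta=bq^{(n-1)m+1+a_{n-1}}$ one has $f_1=1+\beta$ and $f_{m+1}=1+\beta q^{m}$, and then $\frac{1}{1-q^m}+\frac{\beta}{1+\beta q^m}=\frac{1+\beta}{(1-q^m)(1+\beta q^m)}$, which is exactly what cancels the doubled shared factor and reinstates the missing top factor, collapsing the bracket to $\frac{q^{m a_{n-1}}}{(1-q^m)\,f_1\cdots f_{m+1}}$. Multiplying by $g_1\cdots g_m$ fuses everything into the single clean product $\prod_{k=1}^{2m+1}(1+bq^{(n-2)m+k+a_{n-1}})$ with weight $q^{2m a_{n-1}}$ — precisely the expression reached after one step in the proof of Lemma~\ref{l1}.

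I would then iterate. The computation above is uniform: at the $t$-th stage the combined $a_{n-t+1}$-block has $tm+1$ factors, overlaps the next outer block in a single factor exactly when $a_{n-t}=a_{n-t+1}$, and the identical algebra (now with $\beta q^{tm}$ in place of $\beta q^{m}$) cancels the doubling and restores the clean product $\prod_{k=1}^{(t+1)m+1}(1+bq^{(n-t-1)m+k+a_{n-t}})$ with weight $q^{(t+1)m a_{n-t}}$ and accumulated constant $1/(q^m;q^m)_t$. After $n-1$ such steps the sum over the outermost variable $a_1\ge 0$ has no block to its left, so no overlap arises and it telescopes cleanly via \eqref{telesum} with $p=n$ to $\frac{1}{1-q^{nm}}\frac{1}{(-bq;q)_{nm}}$, giving the stated total $\frac{1}{(q^m;q^m)_n(-bq;q)_{nm}}$.

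The only real obstacle is this overlap-resolution step: verifying that peeling off the single shared factor on the diagonal, as dictated by $\sum'$, is \emph{exactly} what the telescoping needs to turn the doubled factor $f_1^2$ back into a consecutive product. Everything else — convergence of the inner sums for $|q|<1$, and the nonvanishing of the denominators guaranteed by $b\neq -q^{-n}$ — is identical to Lemma~\ref{l1}, and the bookkeeping of the accumulating $(q^m;q^m)_t$ is routine. Once the one-step identity is in hand the induction is automatic, since each stage reproduces precisely the intermediate expression occurring in the proof of Lemma~\ref{l1}.
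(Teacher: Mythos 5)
Your proof is correct and is essentially the paper's own argument: both rewrite \eqref{lid3} as a nested sum, split each primed sum into its diagonal term plus a strictly-increasing tail that telescopes exactly as in Lemma \ref{l1}, and iterate inward-to-outward until the single sum \eqref{lilastsum} is reached. The only difference is bookkeeping of the shared factor --- the paper's \eqref{telesum2} drops it from the inner sum's diagonal term, so the recombination is just $1+q^{pm}/(1-q^{pm})=1/(1-q^{pm})$, whereas you keep the diagonal term intact and strip the factor from the outer block, which leads to the equivalent identity $\frac{1}{1-q^{m}}+\frac{\beta}{1+\beta q^{m}}=\frac{1+\beta}{(1-q^{m})(1+\beta q^{m})}$.
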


\begin{proof}
The proof is similar to the proof of Lemma \ref{l1}. We rewrite the
left side of \eqref{lid3} as the nested sum
\begin{multline}\label{nestedsum2}
\sum_{a_1\geq0}\frac{q^{ma_1}}{\prod_{k=1}^{m+1}(1+b q^{k+a_1})}
\sum_{a_2\geq a_1}^{'}\frac{q^{ma_2}}{\prod_{k=1}^{m+1}(1+b
q^{m+k+a_2})}\\ \dots \sum_{a_{n-1}\geq
a_{n-2}}^{'}\frac{q^{ma_{n-1}}}{\prod_{k=1}^{m+1}(1+b
q^{(n-2)m+k+a_{n-1}})}
\\
\sum_{a_{n}\geq a_{n-1}}^{'}\frac{q^{ma_{n}}}{\prod_{k=1}^{m+1}(1+b
q^{(n-1)m+k+a_n})}.
\end{multline}

Next, we note that if $p \geq 1$ is an integer, and the term $1+c
q^{1+a_{i-1}}$ occurs in the next sum out, and none of the
denominators following vanish, then
\begin{multline}\label{telesum2} \sum_{a_{i}\geq
a_{i-1}}^{'}\frac{q^{pma_{i}}}{\prod_{k=1}^{mp+1}(1+c q^{k+a_i})}\\=
\frac{q^{pma_{i-1}}}{\prod_{k=2}^{mp+1}(1+c
q^{k+a_{i-1}})}+\sum_{a_{i}\geq
a_{i-1}+1}\frac{q^{pma_{i}}}{\prod_{k=1}^{mp+1}(1+c q^{k+a_i})}\\
=\frac{q^{pma_{i-1}}}{\prod_{k=2}^{mp+1}(1+c q^{k+a_{i-1}})}+
\frac{1}{1-q^{pm}}\frac{q^{pm(a_{i}+1)}}{\prod_{k=1}^{mp}(1+c
q^{k+a_{i-1}+1})}\\
=\frac{q^{pma_{i-1}}}{(1-q^{pm})\prod_{k=2}^{mp+1}(1+c
q^{k+a_{i-1}})},
\end{multline}
where the second equality follows from the same telescoping argument
used in Lemma \ref{l1}. We now apply this summation result
repeatedly, starting  with the innermost sum at \eqref{nestedsum2}
(with (with $p=1$)), to eventually arrive at the sum at
\eqref{lilastsum} above, thus giving the result.
\end{proof}

\begin{lemma}\label{l3}
Let $|q|<1$ and $b \not=-q^{-n}$ for any positive integer $n$. Then
if $m$ is any positive integer,
\begin{equation}\label{lid4}
\sum_{0\leq a_1\leq a_2\leq \dots \leq
a_n}^{''}\frac{q^{m(a_1+a_2+\dots +
a_n)}}{\prod_{j=0}^{n-1}\prod_{k=0}^{m}(1+bq^{jm+k+a_{j+1}})}
=\frac{1}{(q^m;q^m)_n(-bq;q)_{mn}},
\end{equation}
where the sum is over all $n$-tuples $\{a_1,\dots, a_n\}$  of
integers that satisfy the stated inequality, and the $\sum^{''}$
notation means that if $a_i=a_{i-1}$ for any $i$, then the factor
$1+b q^{(i-1)m+m+a_{i-1}}=1+b q^{im+0+a_i}$ occurs just once in any
denominator product, and in addition, if $a_1=0$, then the factor
$1+b=1+b q^{0+0}$ does not appear in any denominator product.
\end{lemma}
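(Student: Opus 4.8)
The plan is to mirror the proofs of Lemmas \ref{l1} and \ref{l2} almost verbatim. The only structural change between \eqref{lid4} and \eqref{lid3} is that each inner product now runs over $k=0,1,\dots,m$ instead of $k=1,\dots,m+1$; that is, every block of denominator factors has been shifted down by one in the exponent. I therefore expect the same ``rewrite as a nested sum, then telescope one variable at a time'' argument to go through once I have (i) the downward-shifted analogue of the single-variable telescoping step and (ii) a correct treatment of the new smallest exponent, which can now be as low as $0$. Point (ii) is exactly what the extra clause of the $\sum^{''}$ convention (deleting the factor $1+b$ when $a_1=0$) is there to handle.

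First I would record the shifted basic telescoping. A direct partial-fraction identity gives, for every integer $p\ge 1$,
\[
\frac{1}{\prod_{k=0}^{mp}(1+cq^{k+a})}
=\frac{1}{1-q^{pm}}\left[\frac{1}{\prod_{k=0}^{mp-1}(1+cq^{k+a})}-\frac{q^{pm}}{\prod_{k=1}^{mp}(1+cq^{k+a})}\right],
\]
the two cross terms cancelling precisely because $q^{mp+a}=q^{pm+a}$. Summing over $a=a_i\ge a_{i-1}$ telescopes, which is the downward shift of \eqref{telesum}. Splitting off the diagonal term $a_i=a_{i-1}$ --- where the $\sum^{''}$ convention deletes the single factor common to the two adjacent blocks, i.e.\ the $k=0$ factor of the $a_i$-block --- and telescoping the tail $a_i\ge a_{i-1}+1$ exactly as in \eqref{telesum2}, I obtain the shifted $\sum^{''}$ identity
\[
\sum_{a_i\ge a_{i-1}}^{''}\frac{q^{pma_i}}{\prod_{k=0}^{mp}(1+cq^{k+a_i})}
=\frac{q^{pma_{i-1}}}{(1-q^{pm})\prod_{k=1}^{mp}(1+cq^{k+a_{i-1}})}.
\]

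With this in hand I would rewrite \eqref{lid4} as a nested sum in the manner of \eqref{nestedsum2} and apply the identity successively with $p=1,2,3,\dots$, peeling off $a_n,a_{n-1},\dots,a_2$ in turn. The crucial bookkeeping point is that after each peel the denominator left behind, $\prod_{k=1}^{mp}$, abuts the next block exactly, because the deleted $k=0$ factor is the one the two blocks had in common; thus the blocks recombine contiguously, the exponent base stays aligned, and the accumulated scalar is $1/(q^m;q^m)_{n-1}$. After $n-1$ peels one is left with the single outermost sum over $a_1\ge 0$ whose denominator is $\prod_{k=0}^{nm}(1+bq^{k+a_1})$, still carrying the $\sum^{''}$ conventions. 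Here the extra clause is decisive: deleting $1+b$ when $a_1=0$ is precisely the $\sum^{''}$ convention applied at a virtual lower endpoint $a_0=0$, so the outermost sum is evaluated by one further application of the $\sum^{''}$ identity above with $a_{i-1}=0$, $c=b$, $p=n$. This yields
\[
\frac{1}{(q^m;q^m)_{n-1}}\cdot\frac{1}{(1-q^{nm})\prod_{k=1}^{nm}(1+bq^{k})}
=\frac{1}{(q^m;q^m)_n(-bq;q)_{nm}},
\]
as in \eqref{lilastsum}, which is the claimed right-hand side.

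The routine parts are the partial-fraction verification and the exponent bookkeeping after the shift. The step demanding genuine care --- and the one I expect to be the main obstacle --- is confirming that the factor deleted by the $\sum^{''}$ convention at each coincidence $a_i=a_{i-1}$ is exactly the factor the two adjacent blocks share (the bottom, $k=0$, factor of the inner block), so that the recombination stays contiguous and no stray $(1+bq^{\,\cdot})$ survives; and, at the very start of the chain, that the $a_1=0$ clause supplies precisely the analogous deletion of $1+b$. Any mismatch here would leave an extra factor that spoils the clean collapse to $(-bq;q)_{nm}$, so the whole argument rests on these conventions dovetailing with the telescoping exactly as they did in Lemma \ref{l2}.
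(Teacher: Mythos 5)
Your proof is correct and takes essentially the same route as the paper: the paper's own proof of Lemma \ref{l3} likewise parallels the telescoping peel of Lemma \ref{l2} with the blocks shifted down by one, arriving after $n-1$ steps at the single sum over $a_1\geq 0$, which it evaluates by splitting off the $a_1=0$ term (with the factor $1+b$ deleted) and telescoping the tail over $a_1\geq 1$ --- exactly the computation your final application of the shifted $\sum^{''}$ identity (with the virtual endpoint $a_0=0$, $c=b$, $p=n$) performs internally. The only difference is presentational: you state explicitly the downward-shifted telescoping identity and the block-contiguity bookkeeping that the paper leaves implicit in the phrase ``the proof parallels the proof of Lemma \ref{l2}.''
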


\begin{proof}
The proof parallels the proof of Lemma \ref{l2}, to get after $n-1$
steps, that the left side of \eqref{lid4} equals
\begin{multline}\label{lilastsuml3}
\sum_{a_1\geq
0}^{''}\frac{q^{mna_1}}{(q^m;q^m)_{n-1}\prod_{k=0}^{nm}(1+
bq^{k+a_{1}})}\\ = \frac{q^{mn
(0)}}{(q^m;q^m)_{n-1}\prod_{k=1}^{nm}(1+ bq^{k+0})} + \sum_{a_1\geq
1}\frac{q^{mna_1}}{(q^m;q^m)_{n-1}\prod_{k=0}^{nm}(1+
bq^{k+a_{1}})}\\
= \frac{1}{(q^m;q^m)_{n-1}(-bq;q)_{mn}}+ \frac{q^{mn
(1)}}{(q^m;q^m)_{n-1}(1-q^{mn})\prod_{k=0}^{nm-1}(1+ bq^{k+1})}\\
 =\frac{1}{(q^m;q^m)_n(-bq;q)_{nm}}.
\end{multline}
\end{proof}

\section{Hybrid proofs of some $q$-series Identities}

We recall the second iterate of Heine's transformation (see \cite[page 38]{A76}).
\begin{equation}\label{heine2}
\sum_{n=0}^{\infty}\frac{(a,b;q)_n}{(c,q;q)_n}t^n
=\frac{(c/b,bt;q)_{\infty}}{(c,t;q)_{\infty}}
\sum_{n=0}^{\infty}\frac{(abt/c,b;q)_n}{(bt,q;q)_n}\left(\frac{c}{b}\right)^n.
\end{equation}
We will give a hybrid proof of a special case (set $c=0$, replace $a$ with $-a$ and $b$ with $-bq/t$, and finally let $t \to 0$) of this
identity.

\begin{theorem}\label{t2}
\begin{equation}\label{hh}
\sum_{n=0}^{\infty}\frac{(-a;q)_nb^nq^{n(n+1)/2}}{(q;q)_n}
=(-bq;q)_{\infty}
\sum_{n=0}^{\infty}\frac{(ab)^nq^{n^2}}{(q,-bq;q)_n}.
\end{equation}
\end{theorem}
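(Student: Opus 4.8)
The plan is to follow the hybrid template used for the $q$-Binomial Theorem: first establish \eqref{hh} combinatorially for a family of specialized values of $a$ and $b$ (powers of a root of $q$), and then promote it to arbitrary $a$ and $b$ with $|q|<1$ by two applications of the Identity Theorem. The algebraic engine for the combinatorial step will be one of the preliminary summation formulae used with $m=1$. Since the $n$-th summand on the right of \eqref{hh} carries the factor $1/(q,-bq;q)_n = 1/\bigl((q;q)_n(-bq;q)_n\bigr)$, Lemma~\ref{l1} with $m=1$ lets me rewrite it as the multisum
\[
\frac{1}{(q;q)_n(-bq;q)_n}=\sum_{0\leq a_1\leq\cdots\leq a_n}\frac{q^{a_1+\cdots+a_n}}{\prod_{j=0}^{n-1}\prod_{k=1}^{2}\bigl(1+bq^{2j+k+a_{j+1}}\bigr)}.
\]
Multiplying through by $(ab)^n q^{n^2}$ and by the global factor $(-bq;q)_\infty$ then cancels $2n$ of the factors of $(-bq;q)_\infty=\prod_{i\geq1}(1+bq^i)$ against the denominator, leaving a product over a complementary index set that I can read as a partition generating function once $b$ is a power of $q$.

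Next I would set up the two sides as generating functions for explicit families of partitions when $a=q^{\alpha}$ and $b=q^{\beta}$ for positive integers $\alpha,\beta$. On the left, $(-q^{\alpha};q)_n$ generates partitions into distinct parts drawn from $\{\alpha,\alpha+1,\dots,\alpha+n-1\}$, while $b^n q^{n(n+1)/2}/(q;q)_n=q^{\beta n}\,q^{n(n+1)/2}/(q;q)_n$ generates partitions into exactly $n$ distinct parts, each at least $\beta+1$; I would then fuse these two components into a single partition statistic. On the right, the rewriting above produces partitions controlled by the chain $0\leq a_1\leq\cdots\leq a_n$ together with the surviving factors of $(-bq;q)_\infty$ and the $q^{n^2}$ weight. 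The heart of the argument is a weight-preserving bijection between these two families, which would establish \eqref{hh} for all $a=q^{\alpha}$, $b=q^{\beta}$.

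With the restricted identity in hand, I would free the parameters exactly as in the proof of the $q$-Binomial Theorem. Replacing $q$ by an $m$-th root $q^{1/m}$ turns integer exponents into values such as $q^{\alpha/m}$, and letting $m$ range over the positive integers produces a sequence of admissible $a$-values with a limit point interior to the region of convergence. Since, for fixed $b$ with $|q|<1$, both sides of \eqref{hh} are entire in $a$ (the factor $q^{n(n+1)/2}$ on the left and $q^{n^2}$ on the right force absolute convergence for every $a$), the Identity Theorem upgrades the equality to all $a\in\mathbb{C}$. A second application, holding $a$ fixed and sweeping $b$ through an analogous sequence of specialized values accumulating at an interior point (away from the poles $b=-q^{-k}$), then yields \eqref{hh} for all $a$ and all $b$ with $|q|<1$.

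I expect the main obstacle to be the combinatorial core of the second paragraph: identifying the single family of partitions that the two multiplicatively rather different sides both enumerate, and writing down the explicit weight-preserving bijection, in particular the bookkeeping needed to match the quadratic weights $q^{n^2}$ and $q^{n(n+1)/2}$ and to track which $2n$ factors of $(-bq;q)_\infty$ are consumed by the denominator supplied by Lemma~\ref{l1} (indeed, the cleanest interpretation may call for Lemma~\ref{l2} or Lemma~\ref{l3} in place of Lemma~\ref{l1}). A secondary and more routine point is to verify the analyticity and convergence claims carefully enough that both Identity-Theorem steps are valid, especially that the specialized parameter sequences accumulate at an interior point of the domain. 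As a consistency check, one can confirm that \eqref{hh} is exactly the $c=0$, $a\mapsto-a$, $b\mapsto-bq/t$, $t\to0$ specialization of Heine's second iterate \eqref{heine2}, which guarantees the target identity is correct even before the bijection is pinned down.
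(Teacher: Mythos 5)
Your high-level template is the paper's own: prove a specialization of \eqref{hh} by a partition argument and then free the parameters with the Identity Theorem, with Lemma~\ref{l1} at $m=1$ as the algebraic engine behind the right-hand side. But the two things you defer are precisely the content of the proof, and in the setup you describe they would fail. First, the combinatorial core: the paper does not seek a direct bijection between the two families you name; it specializes to base $q^k$ with $a=q^s$, $b=q^r$ and $0<r<s<r+s<k$, so that the left side generates partitions whose constituents lie in \emph{distinguishable} residue classes modulo $k$ (exactly $n$ copies of $r$, distinct parts $\equiv s \pmod{k}$, repeated multiples of $k$). The bijection is then $k$-block conjugation --- distribute the $n$ copies of $r$ to the bottoms of the $n$ columns of width $k$ and conjugate in blocks of width $k$ --- and, crucially, the image partitions are then \emph{re-counted by a different statistic}: not the total number of parts $n$, but the number of parts $\equiv r+s \pmod{k}$. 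It is this re-counting whose generating function is evaluated by \eqref{lid2}, and it is what converts the weight $q^{kn(n+1)/2}$ on the left into $q^{kn^2}$ on the right. Your specialization $a=q^{\alpha}$, $b=q^{\beta}$ over the same base $q$ destroys the mechanism: parts arising from $(-q^{\alpha};q)_n$, from $q^{\beta n}$, and from $1/(q;q)_n$ can coincide, so the product of these factors is not the generating function of a single well-defined family of partitions, and no conjugation-type bijection can even be formulated.

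Second, your Identity Theorem step inherits the same defect. If the restricted identity has base $q$ with $a=q^{\alpha}$, $b=q^{\beta}$, then replacing $q$ by $q^{1/m}$ changes the base to $q^{1/m}$ as well, so the rescaled identities are instances of \eqref{hh} for \emph{different} values of $q$, not a sequence of $(a,b)$ values for one fixed $q$; the one-variable Identity Theorem then has nothing to act on. The modulus structure is exactly what rescues this in the paper: the specialized identity \eqref{hhk} has base $q^{k}$ (and one applies it with $k$ replaced by $km$, etc.), so that after $q\mapsto q^{1/m}$ the base returns to $q^{k}$ while only the parameter becomes a fractional power, giving a sequence such as $b=q^{r/m}$ accumulating at $1$, to which the Identity Theorem applies; a second rescaling $q\mapsto q^{1/k}$ then frees $a$. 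So the missing bijection is not an isolated gap to be filled later: the mod-$k$ specialization you omitted is what makes both the combinatorial step and the analytic continuation step possible, and without it the plan cannot be executed.
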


Remark:  A version of \eqref{hh} was stated by Ramanujan, see for example \cite[\textbf{Entry 1.6.1, page 24}]{AB09}. Proofs of \eqref{hh} have been given by Ramamani \cite{R70} and
Ramamani and  Venkatachaliengar \cite{RV72}. A generalization of \eqref{hh} was proved by Bhargava and  Adiga \cite{BA86}, while Srivastava \cite{S87} showed that \eqref{hh} follows as a special case of Heine's transformation, as described above. Lastly, a combinatorial proof of \eqref{hh} has been given in \cite{BKY10}
by Berndt, Kim and Yee.

\begin{proof}[Proof of Theorem \ref{t2}]
We will prove  for all integers $r$, $s$ and $k$ satisfying $0<r<s<r+s<k$, that
\begin{equation}\label{hhk}
\sum_{n=0}^{\infty}\frac{(-q^s;q)_nq^{rn}q^{kn(n+1)/2}}{(q^k;q^k)_n}
=(-q^{r+k};q^k)_{\infty}
\sum_{n=0}^{\infty}\frac{q^{(s+r)n}q^{kn^2}}{(q^k,-q^{r+k};q^k)_n},
\end{equation}
and \eqref{hh} will then follow from the Identity Theorem, by an argument similar to that used in the proof of the
$q$-Binomial Theorem.

The $n$-th term in the series on the left side of \eqref{hhk} may be regarded as the generating function for partitions with
\begin{itemize}
\item the part $r$ occurring exactly $n$ times,
 \item distinct parts from  $\{s,s+k,s+2k,\dots, s+(n-1)k\}$,
 \item possibly repeating parts from $\{k,2k,3k,\dots ,nk\}$, with each part  occurring at least once.
  \end{itemize}

We consider the Ferrers diagram for such a partition, which may be regarded as having $n$ columns, each of width $k$. We first distribute the $n$ parts of size $r$ so that one such part is placed at the bottom of each column.
We then take the $k$-block conjugate of this partition we get a
partition into $n$ parts with
\begin{itemize}
 \item distinct parts  $\equiv s+r(\mod k)$, with the part $s+r$ not
 appearing and a gap of at least $2k$ between consecutive parts,
 \item distinct parts  $\equiv r (\mod k)$, with the parts $r+jk$ and $r+(j+1)k$ not appearing if the part $r+s+jk$ appears
 (here $j \geq 1$).
  \end{itemize}

Once again, this operation of taking the $k$-block conjugate gives a
bijection between these two sets of partitions. If we now sum over
all $n$, we get all partitions with
\begin{itemize}
 \item distinct parts  $\equiv s+r(\mod k)$, with the part $s+r$ not
 appearing and a gap of at least $2k$ between consecutive parts,
 \item distinct parts  $\equiv r (\mod k)$, with the parts $r+jk$ and $r+(j+1)k$ not appearing if the part $r+s+jk$ appears
 (here $j \geq 1$).
  \end{itemize}

Next, instead of considering partitions of this latter type where there are a total of $n$ parts, we consider instead partitions of this type containing exactly $n$ parts $\equiv r+s(\mod k)$. In other words we consider partitions with
\begin{itemize}
 \item exactly $n$ distinct parts  $\equiv s+r(\mod k)$, with the part $s+r$ not
 appearing and a gap of at least $2k$ between consecutive parts,
 \item distinct parts  $\equiv r (\mod k)$, with the parts $r+jk$ and $r+(j+1)k$ not appearing if the part $r+s+jk$ appears
 (here $j \geq 1$).
  \end{itemize}
It is not difficult to see that the generating function for such partitions is
\begin{multline*}
\sum_{0\leq a_1\leq \dots \leq a_n} \frac{q^{(r+s+(1+a_1)k)+(r+s+(3+a_2)k)+\dots + (r+s+(2n-1+a_n)k)}(-q^{r+k};q^k)_{\infty}}
{\prod_{j=1}^{n}(1+q^{r+(2j-1+a_j)k})(1+q^{r+(2j+a_j)k})}\\
=(-q^{r+k};q^k)_{\infty}q^{(r+s) n}q^{kn^2}
\phantom{sdasdasdasdasdas}\\
\times \sum_{0\leq a_1\leq a_2\leq \dots \leq a_n} \frac{q^{(a_1+a_2+\dots + a_n)k}}
{\prod_{j=1}^{n}(1+q^{r+(2j-1+a_j)k})(1+q^{r+(2j+a_j)k})}\\
=(-q^{r+k};q^k)_{\infty}\frac{q^{(r+s) n}q^{kn^2}}{(q^{k};q^{k})_n(-q^{r+k};q^k)_n},
\end{multline*}
where the last equality follows from \eqref{lid2} (with $m=1$,
$b=q^r$ and $q$ replaced with $q^k$). Now summing over all $n$ gives
\eqref{hhk}, and \eqref{hh} follows.
\end{proof}

We now prove  a pair of identities stated by Ramanujan
(\cite[\textbf{Entry 1.5.1}, page 23]{AB09}, $a$ replaced with
$aq$). Analytic proofs were given by Watson \cite{W37} and Andrews
\cite{A66}, and a combinatorial proof has been given in \cite{BKY10}
by Berndt, Kim and Yee.

\begin{theorem}\label{t3}
If $|q|<1$ and $a \not = -q^{-2n}$ for any integer $n>0$, then
{\allowdisplaybreaks\begin{align}\label{151eq}
\sum_{n=0}^{\infty}\frac{a^nq^{n^2+n}}{(q;q)_n}&=(-aq^2;q^2)_{\infty}
\sum_{n=0}^{\infty}\frac{a^nq^{n^2+2n}}{(q^2;q^2)_n(-aq^2;q^2)_n},\\
&=(-aq^3;q^2)_{\infty}
\sum_{n=0}^{\infty}\frac{a^nq^{n^2+n}}{(q^2;q^2)_n(-aq^3;q^2)_n}. \notag
\end{align}}
\end{theorem}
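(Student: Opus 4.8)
The plan is to prove \eqref{151eq} by the hybrid scheme already used for Theorem \ref{t2}: first establish each identity in the ``restricted'' case where $a$ is a power of $q$ and $q$ is replaced by $q^k$, so that both sides become honest generating functions for integer partitions, and then recover the full statement by continuity and the Identity Theorem. Concretely, replacing $q$ by $q^k$ throughout and setting $a=q^r$ turns the first identity into
\begin{equation*}
\sum_{n=0}^{\infty}\frac{q^{rn}q^{k(n^2+n)}}{(q^k;q^k)_n}
=(-q^{r+2k};q^{2k})_{\infty}\sum_{n=0}^{\infty}\frac{q^{rn}q^{k(n^2+2n)}}{(q^{2k};q^{2k})_n(-q^{r+2k};q^{2k})_n},
\end{equation*}
with the analogous $(-q^{r+3k};q^{2k})$ statement for the second identity. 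I would prove these for all integers $r,k$ in a suitable range (say $0<r<k$ with $k$ large), and then undo the base change by replacing $q$ with $q^{1/k}$; this returns the full identities with $a=q^{r/k}$, a family of values accumulating at $a=1$ as $k\to\infty$. Since the left side is entire in $a$ and the right side is meromorphic with poles only at the excluded points $a=-q^{-2n}$, the Identity Theorem then extends each identity to all admissible $a$, exactly as at the end of the $q$-Binomial Theorem proof.

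For the combinatorial core I would read the common left-hand side, after these substitutions, as the generating function for partitions into parts $\equiv r \pmod{k}$ with smallest part at least $r+2k$ and consecutive parts differing by at least $2k$ (a base-$q^k$ Rogers--Ramanujan condition with the part $r$ adjoined, precisely the structure produced by the $k$-block conjugate of Theorem \ref{t2}). Sorting such a partition by the residue of its parts modulo $2k$ separates a family of distinct parts $\equiv r \pmod{2k}$, which assembles into the infinite product $(-q^{r+2k};q^{2k})_{\infty}$, from a second family $\equiv r+k \pmod{2k}$, which generates a nested multisum of exactly the shape on the left of Lemma \ref{l2}. Applying \eqref{lid3} with $m=1$, base $q^{2k}$, and $b=q^r$ collapses that multisum to $1/\big((q^{2k};q^{2k})_n(-q^{r+2k};q^{2k})_n\big)$, which is the $n$-th term of the restricted right-hand side; summing over $n$ finishes the first identity.

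The second identity is handled the same way, but now it is the parts $\equiv r+k \pmod{2k}$ that are to be pulled out as the distinct family, so that one instead applies Lemma \ref{l3} (equation \eqref{lid4}) with $b=q^{r+k}$, producing the $(-q^{r+3k};q^{2k})$ factor that becomes $(-aq^3;q^2)$ after the base change. The reason the two identities invoke two different lemmas is that the even-offset product $(-aq^2;q^2)$ and the odd-offset product $(-aq^3;q^2)$ force slightly different coincidence conditions among the block-conjugate parts: when two of the ordering indices $a_i$ coincide, one denominator factor must be counted only once, and whether the exceptional factor sits at the top of a block or instead involves the bottom term $1+b=1+bq^{0}$ is exactly the distinction between the $\sum'$ convention of Lemma \ref{l2} and the $\sum''$ convention of Lemma \ref{l3} (the latter also discarding $1+b$ when $a_1=0$).

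The main obstacle I anticipate is bookkeeping the bijection precisely enough that the resulting nested sum lands on the hypotheses of the intended lemma rather than on either neighbor. One must verify that sorting modulo $2k$ really does split the two residue classes cleanly, that the gap condition transforms into distinctness between consecutive same-class parts so the infinite product is exactly $(-q^{r+2k};q^{2k})_{\infty}$ (resp. its odd analogue), and that the coincidence and boundary cases reproduce the $\sum'$ or $\sum''$ conventions verbatim; pinning down which of \eqref{lid2}, \eqref{lid3}, \eqref{lid4} is the correct evaluation is where the real work lies, not in any single calculation. Once the restricted identities are secured, the analytic lift is routine, the only point requiring care being that the family $\{q^{r/k}\}$ accumulates at a point interior to the region $a\neq -q^{-2n}$ on which both sides are analytic.
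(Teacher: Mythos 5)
Your proposal is correct and follows essentially the same route as the paper: prove the restricted identity \eqref{151eqk} by interpreting the left side (via $k$-block conjugation) as counting partitions with parts $\equiv r \pmod{k}$, gaps at least $2k$, and smallest part at least $r+2k$, split the parts by residue mod $2k$, evaluate the resulting multisum by Lemma \ref{l2} (with $m=1$, $b=q^r$, base $q^{2k}$) for the first identity, and finish with the Identity Theorem. Your treatment of the second identity --- marking the parts $\equiv r \pmod{2k}$ and invoking Lemma \ref{l3} with $b=q^{r+k}$ so that the $a_1=0$ boundary case matches the $\sum''$ convention --- is exactly what the paper leaves implicit as ``minor technicalities,'' and your parameter choices there are the right ones.
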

\begin{proof}
We will prove only the first identity, as the proof of the second is very similar.
We will first show,  for all integers $0<r<k$, that
\begin{equation}\label{151eqk}
\sum_{n=0}^{\infty}\frac{q^{rn}q^{k(n^2+n)}}{(q^k;q^k)_n}
=(-q^{r+2k};q^{2k})_{\infty}
\sum_{n=0}^{\infty}
\frac{q^{rn}q^{k(n^2+2n)}}{(q^{2k};q^{2k})_n(-q^{r+2k};q^{2k})_n}.
\end{equation}

The $n$-th term in the series on the left side of \eqref{151eqk} may be interpreted as the generating function for partitions with
\begin{itemize}
\item the part $r$ occurring exactly $n$ times,
 \item  repeating parts from $\{k,2k,3k,\dots ,nk\}$, with each part  occurring at least twice.
  \end{itemize}

We once again consider the Ferrers diagram for such a partition, which also may be regarded as having $n$ columns, each of width $k$. We first distribute the $n$ parts of size $r$ so that one such part is placed at the bottom of each column.
We then take the $k$-block conjugate of this partition we get a
partition into $n$ parts with
\begin{itemize}
 \item distinct parts  $\equiv r(\mod k)$, with the parts $r$ and $r+k$ not
 appearing and a gap of at least $2k$ between consecutive parts.
  \end{itemize}

If we now sum over
all $n$, we get all partitions with
\begin{itemize}
 \item distinct parts  $\equiv r(\mod k)$, with the parts $r$ and $r+k$ not
 appearing and a gap of at least $2k$ between consecutive parts.
  \end{itemize}

We consider instead partitions of this type containing exactly $n$ distinct parts $\equiv r+k(\mod 2k)$, with the part $r+k$ not appearing, and distinct parts $\equiv r(\mod 2k)$, with the part $r$ not appearing and a  gap of at least $2k$  between any consecutive parts. (If there are no  parts $\equiv r+k(\mod 2k)$, then the partition consists entirely of distinct parts $\equiv r(\mod 2k)$, with the part $r$ not appearing, and these partitions have generating function $(-q^{r+2k};q^{2k})_{\infty}$). In other words we consider partitions with
\begin{itemize}
 \item exactly $n$ distinct parts  $\equiv r+k(\mod 2k)$, with the part $r+k$ not
 appearing and a gap of at least $2k$ between consecutive parts,
 \item distinct parts  $\equiv r (\mod 2k)$, with the part $r$ not appearing, and with the parts $r+(2j-2)k$ and $r+2jk$ not appearing if the part $r+(2j-1)k$ appears
 (here $j \geq 2$).
  \end{itemize}
The generating function for such partitions is
\begin{multline*}
\sum_{0\leq a_1\leq \dots \leq a_n}' \frac{q^{(r+(3+2a_1)k)+(r+(5+2a_2)k)+\dots + (r+(2n+1+2a_n)k)}(-q^{r+2k};q^{2k})_{\infty}}
{\prod_{j=2}^{n+1}(1+q^{r+(2j-2+2a_j)k})(1+q^{r+(2j+2a_j)k})}\\
=(-q^{r+2k};q^{2k})_{\infty}q^{r n}q^{k(n^2+2n)}
\phantom{sdasdasdasdasdas}\\
\times \sum_{0\leq a_1\leq a_2\leq \dots \leq a_n}' \frac{q^{(a_1+a_2+\dots + a_n)2k}}
{\prod_{j=1}^{n}(1+q^{r+(j+a_j)2k})(1+q^{r+(j+1+a_j)2k})}\\
=(-q^{r+2k};q^{2k})_{\infty}\frac{q^{r n}q^{k(n^2+2n)}}{(q^{2k};q^{2k})_n(-q^{r+2k};q^{2k})_n},
\end{multline*}
where the last equality follows from \eqref{lid3} (with $b=q^r$,
$m=1$ and $q$ replaced with $q^{2k}$). Now summing over all $n$
gives \eqref{151eqk}, and the first identity at\eqref{151eq} follows
once again by the Identity Theorem.

The proof of the second identity is similar, except that instead of considering partitions with exactly $n$ parts $\equiv r+k (\mod 2k)$ with the part $r+k$ not appearing, we consider partitions with exactly $n$ parts $\equiv r (\mod 2k)$ with the part $r$ not appearing. The second identity at \eqref{151eq} then follows, after some minor technicalities.
\end{proof}

Next we give a hybrid proof of a special case of another identity of
Ramanujan (see \textbf{Entry 1.4.17} on page 22 of \cite{AB09}).
\begin{theorem}\label{t4} If $|q|<1$ and $a,b \not = -q^{-n}$ for any positive integer $n$, then
\begin{equation}\label{symeq}
(-bq;q)_{\infty} \sum_{n=0}^{\infty}
\frac{a^nq^{n(n+1)/2}}{(q;q)_n(-bq;q)_n} = (-aq;q)_{\infty}
\sum_{n=0}^{\infty} \frac{b^nq^{n(n+1)/2}}{(q;q)_n(-aq;q)_n}.
\end{equation}
\end{theorem}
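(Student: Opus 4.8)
The plan is to follow the template established for Theorems~\ref{t2} and~\ref{t3}: prove a restricted, purely combinatorial version of \eqref{symeq} in which $a$ and $b$ are specialized to powers of $q$, and then recover the full statement from the Identity Theorem. Concretely, I would first establish, for all integers $k$ and $0<r<s$ with $r+s<k$, the $k$-analogue
\[
(-q^{s+k};q^k)_{\infty}\sum_{n=0}^{\infty}\frac{q^{rn}q^{kn(n+1)/2}}{(q^k;q^k)_n(-q^{s+k};q^k)_n}
=(-q^{r+k};q^k)_{\infty}\sum_{n=0}^{\infty}\frac{q^{sn}q^{kn(n+1)/2}}{(q^k;q^k)_n(-q^{r+k};q^k)_n},
\]
which is exactly \eqref{symeq} under $a=q^r$, $b=q^s$, $q\mapsto q^k$. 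Once this holds for a family of integer pairs $(r,s)$, I would free the variables one at a time by the same root-of-$q$ substitutions and accumulation arguments used for the $q$-Binomial Theorem and for \eqref{hhk}: fix $b$ as a power of $q$, let $a$ run through a sequence of powers of $q$ accumulating at an interior point, invoke the Identity Theorem in $a$, and then repeat in $b$.

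For the restricted identity I would interpret the $n$-th term on the left via Lemma~\ref{l1} with $m=1$, $b=q^s$, and $q$ replaced by $q^k$, so that $\frac{1}{(q^k;q^k)_n(-q^{s+k};q^k)_n}$ becomes the positive nested sum supplied by \eqref{lid2}. Reading off the generating function, the $n$-th term then counts partitions having the part $r$ exactly $n$ times, $n$ distinct positive multiples of $k$, and distinct parts drawn from the shifted pairs $\{s+(2j-1)k,\,s+2jk\}$, precisely as in the proof of Theorem~\ref{t2}. I would next apply the $k$-block conjugate (placing one part $r$ at the foot of each of the $n$ width-$k$ columns and transposing blocks) and sum over $n$. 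This rewrites the left side as the generating function for partitions into distinct parts, each congruent to $r$ or to $s$ modulo $k$, subject to explicit gap and exclusion conditions recording the interaction between the two residue classes.

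The crux is to show that the partition set coming from the left side and the one coming from the right side (obtained by the identical argument with $r$ and $s$ interchanged) have the same generating function; equivalently, that the common object is symmetric under $r\leftrightarrow s$. I expect the cleanest route is to verify that both sides equal the manifestly symmetric double sum
\[
\sum_{i,j\geq 0}\frac{q^{ri+sj}\,q^{k\binom{i+1}{2}+k\binom{j+1}{2}+kij}}{(q^k;q^k)_i\,(q^k;q^k)_j},
\]
where $i$ and $j$ count the distinct parts in the two residue classes and the exponent $k\binom{i+1}{2}+k\binom{j+1}{2}+kij$ records their sizes together with their mutual nesting. The main obstacle will be pinning down the gap and exclusion conditions exactly so that the $k$-block conjugate is genuinely a bijection onto this symmetric set; tracking which parts are forbidden, and with what minimal gaps, when parts of the two classes interleave is the delicate bookkeeping, just as the cross conditions in Theorems~\ref{t2} and~\ref{t3} had to be fixed with care. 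With the symmetric form in hand the restricted identity is immediate, and \eqref{symeq} then follows by the Identity Theorem.
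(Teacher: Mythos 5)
Your overall skeleton---prove the specialization $a=q^r$, $b=q^s$, $q\mapsto q^k$ for integer parameters and then free $a$ and $b$ one at a time via the Identity Theorem---is exactly the paper's, and your restricted identity is the paper's \eqref{symeqk} with the sides swapped. The gap is in the proof of the restricted identity itself, and it is compounded by two concrete errors. First, your combinatorial reading of the $n$-th term (``the part $r$ exactly $n$ times, $n$ distinct positive multiples of $k$, distinct parts from shifted pairs'') is the interpretation appropriate to Theorem \ref{t2}, where the Pochhammer $(-q^s;q^k)_n$ sits in the \emph{numerator}; in \eqref{symeqk} the Pochhammer $(-q^{s+k};q^k)_n$ is in the \emph{denominator}, so no such pre-conjugation picture exists, and in fact the paper's proof of Theorem \ref{t4} uses no $k$-block conjugation at all. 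Second, expanding with Lemma \ref{l1} is the wrong tool here: \eqref{lid2} produces exclusion conditions that depend on the index of a part within its residue class (if the $j$-th part $\equiv r \pmod{k}$ is $r+pk$, the excluded parts are $s+(p+j-1)k$ and $s+(p+j)k$), and under these index-shifted conditions the $r\leftrightarrow s$ symmetry you need is invisible. The paper instead expands with Lemma \ref{l2}, whose conditions are index-free (part $s+pk$ present forces $r+pk$ and $r+(p+1)k$ absent); the symmetry is then a pure contrapositive (``$s+pk$ present $\Rightarrow$ $r+pk$ absent'' is the same condition as ``$r+pk$ present $\Rightarrow$ $s+pk$ absent,'' and similarly with the shift $p\mapsto p+1$), after which Lemma \ref{l3}---whose double-prime convention exactly absorbs the boundary case where the excluded part $s+(p-1)k$ equals $s$---re-sums the same partition set according to the number of parts $\equiv r\pmod{k}$, producing the other side. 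Your plan never reaches an analogue of this step: you explicitly defer the ``delicate bookkeeping'' of matching the conjugated set to your symmetric set, but that bookkeeping \emph{is} the theorem, so the proposal as written is incomplete.

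That said, your symmetric double sum is correct, and it is worth seeing why: summing it over $j$ with Euler's identity $\sum_{j\geq 0} z^j q^{k\binom{j}{2}}/(q^k;q^k)_j=(-z;q^k)_\infty$, applied with $z=q^{s+(i+1)k}$, and using $(-q^{s+(i+1)k};q^k)_\infty=(-q^{s+k};q^k)_\infty/(-q^{s+k};q^k)_i$, collapses it to the left side of your restricted identity, while summing over $i$ first gives the right side. But this computation works verbatim with arbitrary complex $a,b$ in place of $q^r,q^s$, so it proves \eqref{symeq} in full generality with no integer restriction and no appeal to the Identity Theorem---a clean, purely analytic proof that bypasses the hybrid method entirely. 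So you should either complete the combinatorial route the way the paper does (Lemma \ref{l2} in, contrapositive symmetry, Lemma \ref{l3} out), or abandon the combinatorics and run the two-line Euler argument; the current proposal sits between these two complete proofs and delivers neither.
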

Remark: In the more general identity stated by Ramanujan, the terms
$(-aq;q)_n$ and $(-bq;q)_n$ above are replaced, respectively, with
$(-aq;q)_{mn}$ and $(-bq;q)_{mn}$, where $m$ is any positive
integer. A combinatorial proof of Ramanujan's identity has been
given in \cite{BKY10} by Berndt, Kim and Yee.
\begin{proof}
We will show for all integers $r$, $s$, $k$ satisfying $0<r<s<k$ that
\begin{equation}\label{symeqk}
(-q^{r+k};q^k)_{\infty} \sum_{n=0}^{\infty}
\frac{q^{s n}q^{k n(n+1)/2}}{(q^k,-q^{r+k};q^k)_n} =
(-q^{s+k};q^k)_{\infty} \sum_{n=0}^{\infty}
\frac{q^{r n}q^{kn(n+1)/2}}{(q^k,-q^{s+k};q^k)_n},
\end{equation}
and the full result at \eqref{symeq} will follow once again from the Identity Theorem.

By \eqref{lid3} (with $m=1$ and $q$ replaced with $q^k$), the left side of \eqref{symeqk} equals
\begin{multline*}
\sum_{n=0}^{\infty}\sum_{0\leq a_1\leq a_2\leq \dots \leq
a_n}^{'}\frac{  q^{\sum_{j=1}^n s+(j+a_j)k}(-q^{r+k};q^k)_{\infty}}
{\prod_{j=0}^{n-1}
(1+q^{r+(j+1+a_{j+1})k})(1+q^{r+(j+2+a_{j+1})k})}\\
=\sum_{n=0}^{\infty}\sum_{0\leq a_1\leq a_2\leq \dots \leq
a_n}^{'}\frac{  q^{\sum_{j=1}^n s+(j+a_j)k}(-q^{r+k};q^k)_{\infty}}
{\prod_{j=1}^{n}
(1+q^{r+(j+a_{j})k})(1+q^{r+(j+1+a_{j})k})}.
\end{multline*}
The $n$-th term of this latter series may be regarded as the generating function for partitions with
\begin{itemize}
 \item exactly $n$ distinct parts  $\equiv s(\mod k)$, with the part $s$ not
 appearing,
 \item distinct parts  $\equiv r (\mod k)$, with the part $r$ not appearing, and with the parts $r+pk$ and $r+(p+1)k$ not appearing if the part $s+pk$ appears
 (here $p \geq 1$),
  \end{itemize}
and so the entire series may be regarded as the generating function for partitions with
\begin{itemize}
 \item  distinct parts  $\equiv s(\mod k)$, with the part $s$ not
 appearing,
 \item distinct parts  $\equiv r (\mod k)$, with the part $r$ not appearing, and with the parts $r+pk$ and $r+(p+1)k$ not appearing if the part $s+pk$ appears
 (here $p \geq 1$).
  \end{itemize}
These conditions are equivalent to the conditions
\begin{itemize}
 \item  distinct parts  $\equiv r(\mod k)$, with the part $r$ not
 appearing,
 \item distinct parts  $\equiv s (\mod k)$, with the part $s$ not appearing, and with the parts $s+(p-1)k$ and $s+pk$ not appearing if the part $r+pk$ appears
 (here $p \geq 1$).
  \end{itemize}
The generating function for such partitions containing exactly $n$ distinct parts $\equiv r(\mod k)$ is
\begin{multline*}
\sum_{0\leq a_1\leq a_2\leq \dots \leq
a_n}^{''}\frac{  q^{\sum_{j=1}^n r+(j+a_j)k}(-q^{s+k};q^k)_{\infty}}
{\prod_{j=1}^{n}
(1+q^{s+(j-1+a_{j})k})(1+q^{s+(j+a_{j})k})}\\
=\sum_{0\leq a_1\leq a_2\leq \dots \leq
a_n}^{''}\frac{  q^{\sum_{j=1}^n r+(j+a_j)k}(-q^{s+k};q^k)_{\infty}}
{\prod_{j=0}^{n-1}
(1+q^{s+(j+a_{j+1})k})(1+q^{s+(j+1+a_{j+1})k})}\\
=\frac{(-q^{s+k};q^k)_{\infty}q^{rn}q^{kn(n+1)/2}}{(q^k,-q^{s+k};q^k)_n},
\end{multline*}
where the last equality follows from \eqref{lid4} (with $m=1$, $q$ replaced with $q^k$, and $b=q^s$). The identity at \eqref{symeqk} now follows upon summing over all $n$.
\end{proof}

\section{Some New Partitions Identities Deriving from Identities of Rogers-Ramanujan-Slater type}

Lemmas \ref{l1} - \ref{l3} allow us to derive new partition interpretations
of some well-known analytic identities.

\subsection{The Rogers-Ramanujan Identities}

The following identities appear in Slater's paper \cite{S52}
(\textbf{S14} refers to the identity numbered (14) in Slater's paper
\cite{S52}, and similarly for other identities labelled below):
\begin{align}\label{rr1}
\sum_{n=0}^{\infty}\frac{q^{n^2+n}}{(q;q)_n}&=\frac{1}{(q^2,q^3;q^5)_{\infty}},
\tag{\textbf{S14}}\\
(-q^2;q^2)_{\infty}\sum_{n=0}^{\infty}\frac{q^{n^2+2n}}{(q^4;q^4)_n}
&=\frac{1}{(q^2,q^3;q^5)_{\infty}},
\tag{\textbf{S16}}\\
(-q;q^2)_{\infty}\sum_{n=0}^{\infty}\frac{q^{n^2+n}}{(q^2;q^2)_n(-q;q^2)_{n+1}}
&=\frac{1}{(q^2,q^3;q^5)_{\infty}}. \tag{\textbf{S94}}
\end{align}

Each of these identities had also previously been proven by Rogers
\cite{R94}. The equality of the three left
sides of  these  equations easily follow from Theorem \ref{t3}, and in fact they could also be proved directly from the summation formulae in Lemmas
\ref{l2} and \ref{l3}.

Perhaps more interesting is the result of interpreting the left
sides of \textbf{S16} and \textbf{S94} using the summation formula
in Lemma \ref{l1}. As is well known, the identity at \textbf{S14} (\emph{The Second Rogers-Ramanujan Identity}) implies that if $A(n)$
denotes the number of partitions of $n$ into distinct parts with no
1's and a gap of at least 2 between consecutive parts, and $B(n)$
denotes the number of partitions of $n$ into parts $\equiv 2, 3
(\mod 5)$, then $A(n)=B(n)$ for all positive integers $n$. Lemma
\ref{l1} now lets us describe two other sets of partitions of each
positive integer $n$ which are also equinumerous with the sets of
partitions counted by $A(n)$ and $B(n)$.

\begin{theorem}
For a positive integer $n$, let $A(n)$ denotes the number of partitions of $n$ into distinct parts
with no 1's and a gap of at least 2 between consecutive parts, and let $B(n)$
denote the number of partitions of $n$ into parts $\equiv 2, 3 (\mod 5)$.

Let $C(n)$ denote the number of partitions of $n$ into distinct parts with no 1's appearing,
such that if $o_j$ is the j-th odd part (where we order the parts in ascending order),
then the even parts $o_j+2j-3$ and $o_j +2j-1$ do not appear.

Let $D(n)$ denote the number of partitions of $n$ into distinct parts with no 1's appearing,
such that if $e_j$ is the j-th even part (where again we order the parts in ascending order),
then the odd parts $e_j+2j-1$ and $e_j +2j+1$ do not appear.

Then
\begin{equation}
A(n)=B(n)=C(n)=D(n).
\end{equation}
\end{theorem}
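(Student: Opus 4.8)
The plan is to prove the chain $A(n)=B(n)=C(n)=D(n)$ entirely at the level of generating functions. The equality $A(n)=B(n)$ is precisely the combinatorial reading of the Second Rogers--Ramanujan identity \textbf{S14}, which we take as known; thus $\sum_{n\ge 0}A(n)q^n=\sum_{n\ge 0}B(n)q^n=1/(q^2,q^3;q^5)_\infty$. By Theorem \ref{t3} (specialized to $a=1$, after simplifying $(q^2;q^2)_n(-q^2;q^2)_n=(q^4;q^4)_n$ and $(-q^3;q^2)_\infty/(-q^3;q^2)_n=(-q;q^2)_\infty/(-q;q^2)_{n+1}$), the left-hand sides of \textbf{S14}, \textbf{S16}, and \textbf{S94} are all equal, hence all equal $1/(q^2,q^3;q^5)_\infty$. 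It therefore suffices to show that the left-hand side of \textbf{S16} is the generating function $\sum_{n\ge 0}C(n)q^n$ and that the left-hand side of \textbf{S94} is $\sum_{n\ge 0}D(n)q^n$; the four-way equality then follows immediately.

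For \textbf{S16} I would sort the $C$-partitions by the number $n$ of odd parts and compute the generating function of those with exactly $n$ of them. Writing the ascending odd parts as $o_j=2j+1+2a_j$ with $0\le a_1\le a_2\le\cdots\le a_n$ (these inequalities encode distinctness and ``no $1$'s''), their contribution is $q^{n^2+2n}q^{2(a_1+\cdots+a_n)}$, while the admissible even parts are all distinct even numbers except the forbidden pair $o_j+2j-3=4j-2+2a_j$ and $o_j+2j-1=4j+2a_j$ attached to each $o_j$. This yields the factor $(-q^2;q^2)_\infty\prod_{j=1}^n\big[(1+q^{4j-2+2a_j})(1+q^{4j+2a_j})\big]^{-1}$. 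Summing over $0\le a_1\le\cdots\le a_n$ and invoking \eqref{lid2} with $m=1$, $b=1$, and $q$ replaced by $q^2$ collapses the sum to $1/(q^4;q^4)_n$, so the $n$-th term is exactly $(-q^2;q^2)_\infty q^{n^2+2n}/(q^4;q^4)_n$, the $n$-th term of the left-hand side of \textbf{S16}. Summing over $n$ identifies that side with $\sum_{n\ge 0}C(n)q^n$.

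For \textbf{S94} the same scheme applies with the roles of odd and even parts exchanged. I would sort the $D$-partitions by the number $n$ of even parts, write the ascending even parts as $e_j=2j+2a_j$ (so their sum contributes $q^{n^2+n}q^{2(a_1+\cdots+a_n)}$), and describe the admissible odd parts ($\ge 3$) as all distinct odds avoiding the forbidden pair $e_j+2j-1=4j-1+2a_j$ and $e_j+2j+1=4j+1+2a_j$. Applying \eqref{lid2} with $m=1$, $b=q$, and $q$ replaced by $q^2$ produces $1/\big((q^2;q^2)_n(-q^3;q^2)_n\big)$; recognizing $(-q^3;q^2)_\infty/(-q^3;q^2)_n=(-q^{2n+3};q^2)_\infty=(-q;q^2)_\infty/(-q;q^2)_{n+1}$ shows the $n$-th term equals that of the left-hand side of \textbf{S94}, identifying it with $\sum_{n\ge 0}D(n)q^n$.

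The individual calculations are short, so the main obstacle is bookkeeping rather than any deep step. One must verify that, for each partition type, the two forbidden parts attached to successive driving parts never collide; this holds because $a_{j+1}\ge a_j$ forces $4j+2a_j\ne 4(j+1)-2+2a_{j+1}$, so the plain product in \eqref{lid2}---rather than the primed sums of Lemmas \ref{l2} or \ref{l3}---is the correct tool. One must also handle the boundary cancellation in the \textbf{S94} case, where the telescoping of $(-q;q^2)_{n+1}$ against $(-q;q^2)_\infty$ leaves exactly the distinct odd parts $\ge 2n+3$ demanded by the exclusion conditions. Once these matchings are pinned down, Lemma 1 performs all the summation and the equality $A(n)=B(n)=C(n)=D(n)$ is immediate.
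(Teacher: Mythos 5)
Your proposal is correct and takes essentially the same route as the paper: $A(n)=B(n)$ from the standard reading of \textbf{S14}, equality of the three left sides of \textbf{S14}, \textbf{S16}, \textbf{S94} via Theorem \ref{t3}, and then identification of the \textbf{S16} and \textbf{S94} left sides with $\sum_n C(n)q^n$ and $\sum_n D(n)q^n$ by applying Lemma \ref{l1} (equation \eqref{lid2}) with exactly the same substitutions ($m=1$, $q\mapsto q^2$, and $b=1$ resp.\ $b=q$). The paper writes out only the $D(n)$ case (with the parametrization $e_j=2j+2a_j$, matching yours) and leaves the $C(n)$ case as ``similar,'' so your explicit treatment of both cases, including the check that the forbidden parts never collide, is if anything slightly more complete than the printed proof.
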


\begin{proof}
From what has been said already about the equality of the three left sides at \textbf{S14}, \textbf{S16} and \textbf{S94}, all that is necessary is to show that
\begin{align*}
(-q^2;q^2)_{\infty}\sum_{n=0}^{\infty}\frac{q^{n^2+2n}}{(q^4;q^4)_n}
&=\sum_{n=0}^{\infty}C(n)q^n\\
(-q;q^2)_{\infty}\sum_{n=0}^{\infty}\frac{q^{n^2+n}}{(q^2;q^2)_n(-q;q^2)_{n+1}}
&=\sum_{n=0}^{\infty}D(n)q^n.
\end{align*}

We do this for the second identity only, since the proof for the former follows similarly. By Lemma \ref{l1}, with $q$ replaced with $q^2$, $m=1$ and $b=q$,
\begin{align*}
(-q;q^2)_{\infty}&\sum_{n=0}^{\infty}\frac{q^{n^2+n}}{(q^2;q^2)_n(-q;q^2)_{n+1}}
=(-q^3;q^2)_{\infty}\sum_{n=0}^{\infty}\frac{q^{n^2+n}}{(q^2;q^2)_n(-q^3;q^2)_{n}}\\
&=\sum_{n=0}^{\infty}\sum_{0\leq a_1\leq a_2\leq \dots \leq a_n}\frac{q^{(2+2a_1)+(4+2a_2)+\dots+
(2n+2a_n)}(-q^3;q^2)_{\infty} } {\prod_{j=1}^{n}(1+q^{4j-1+2a_{j}})(1+q^{4j+1+2a_{j}})}.
\end{align*}
This last series is the generating function for partitions into distinct parts, with no 1's appearing, and such that if $2j+2a_j$ is the $j$-th even part, then the odd parts $(2j+2a_j) +2j-1$ and $(2j+2a_j) +2j+1$ do not appear. This is precisely the partitions of an integer $n$ counted by $D(n)$.
\end{proof}

As an example we consider the nine partitions of 15 counted by $B(15)$, $C(15)$ and $D(15)$. Those counted by $B(15)$ are
\begin{multline*}
\{3, 2, 2, 2, 2, 2, 2\}, \{3, 3, 3, 2, 2, 2\}, \{3, 3, 3, 3, 3\},
\{7, 2, 2, 2, 2\}, \{7, 3, 3, 2\},\\ \{8, 3, 2, 2\}, \{8, 7\}, \{12, 3\}, \{13, 2\},
\end{multline*}
those counted by $C(15)$ are
{\allowdisplaybreaks
\begin{equation*}
\{7, 5, 3\}, \{8, 5, 2\},
 \{9, 4, 2\}, \{9, 6\}, \{10, 5\}, \{11, 4\}, \{12, 3\}, \{13, 2\}, \{15\},
\end{equation*}}
while those counted by $D(15)$ are
\begin{equation*}
\{7, 5, 3\}, \{7, 6, 2\},
 \{8, 4, 3\}, \{8, 7\}, \{10, 5\}, \{11, 4\}, \{12, 3\}, \{13, 2\}, \{15\}.
\end{equation*}

Note that $D(15)$ does not count, for example, $\{8, 5, 2\}$ (since $e_1=2$ and $e_1+2(1)+1=5$), while $C(15)$ does not count, for example, $\{7, 6, 2\}$ (since $o_1=7$ and $o_1+2(1)-3=6$).

Three partner identities which also appear in Slater's paper
\cite{S52} and which were also previously proven by Rogers
\cite{R94} are the following: {\allowdisplaybreaks\begin{align}
\sum_{n=0}^{\infty}\frac{q^{n^2}}{(q;q)_n}&=\frac{1}{(q,q^4;q^5)_{\infty}},
\tag{\textbf{S18}}\\
(-q^2;q^2)_{\infty}\sum_{n=0}^{\infty}\frac{q^{n^2}}{(q^2,-q^2;q^2)_n}
&=\frac{1}{(q,q^4;q^5)_{\infty}},
\tag{\textbf{S20}}\\
(-q;q^2)_{\infty}\sum_{n=0}^{\infty}\frac{q^{n^2+n}}{(q^2;q^2)_n(-q;q^2)_{n}}
&=\frac{1}{(q,q^4;q^5)_{\infty}}. \tag{\textbf{S99}}
\end{align}}

The equality of the three left sides of  these equations once again
easily follow from  the summation formulae in Lemmas \ref{l2} and
\ref{l3}. The identity \textbf{S18} (\emph{The First
Rogers-Ramanujan Identity}) also has a well-known interpretation in
terms of partitions, namely, that if $A(n)$ denotes the number of
partitions of $n$ into distinct parts with a gap of at least 2
between consecutive parts, and $B(n)$ denotes the number of
partitions of $n$ into parts $\equiv 1, 4 (\mod 5)$, then
$A(n)=B(n)$ for all positive integers $n$.

 As with the previous three identities, Lemma
\ref{l1} implies two new partition identities.

\begin{theorem}
For a positive integer $n$, let $A(n)$ denotes the number of
partitions of $n$ into distinct parts  a gap of at least 2 between
consecutive parts, and let $B(n)$ denote the number of partitions of
$n$ into parts $\equiv 1, 4 (\mod 5)$.

Let $C(n)$ denote the number of partitions of $n$ into distinct
parts, such that if $o_j$ is the j-th odd part (where we order the
parts in ascending order), then the even parts $o_j+2j-3$ and $o_j
+2j-1$ do not appear.

Let $D(n)$ denote the number of partitions of $n$ into distinct
parts, such that if $e_j$ is the j-th even part (where again we
order the parts in ascending order), then the odd parts $e_j+2j-3$
and $e_j +2j-1$ do not appear.

Then
\begin{equation}
A(n)=B(n)=C(n)=D(n).
\end{equation}
\end{theorem}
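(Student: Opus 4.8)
The plan is to mirror the proof of the preceding theorem. Since $A(n)=B(n)=C(n)=D(n)$ for all $n$ is equivalent to the equality of the four generating functions $\sum_n A(n)q^n,\dots,\sum_n D(n)q^n$ as power series in $q$, I would assemble the chain from three facts. By the discussion immediately preceding the theorem, the three left-hand sides of \textbf{S18}, \textbf{S20} and \textbf{S99} are equal (this is exactly what Lemmas \ref{l2} and \ref{l3} provide), and $A(n)=B(n)$ is the partition form of the First Rogers--Ramanujan identity \textbf{S18}. Hence it remains only to exhibit the left sides of \textbf{S20} and \textbf{S99} as $\sum_n C(n)q^n$ and $\sum_n D(n)q^n$; these two combinatorial interpretations, both via Lemma \ref{l1}, are the only genuinely new content.

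For $D(n)$ I would argue just as in the proof of the first theorem, but now retaining the part $1$. Organize a partition counted by $D(n)$ by its even parts: if there are $\ell$ of them, write the $j$-th even part as $e_j=2j+2a_j$ with $0\le a_1\le\cdots\le a_\ell$, which is a bijection onto $\ell$-element sets of even parts. The odd parts then range over all of $(-q;q^2)_\infty$ except the forbidden values $e_j+2j-3$ and $e_j+2j-1$, which in the $a_j$ variables become $4j-3+2a_j$ and $4j-1+2a_j$. Thus the generating function for the $D(n)$-partitions with exactly $\ell$ even parts is
\[
(-q;q^2)_\infty\, q^{\ell^2+\ell}\sum_{0\le a_1\le\cdots\le a_\ell}\frac{q^{2(a_1+\cdots+a_\ell)}}{\prod_{j=1}^{\ell}(1+q^{4j-3+2a_j})(1+q^{4j-1+2a_j})}.
\]
Applying Lemma \ref{l1} with $q$ replaced by $q^2$, $m=1$ and $b=q^{-1}$ collapses the inner sum to $\left[(q^2;q^2)_\ell(-q;q^2)_\ell\right]^{-1}$, and summing over $\ell$ reproduces the left side of \textbf{S99}. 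Here there is no boundary difficulty: since $e_1\ge 2$, the smallest forbidden odd part $e_1-1$ is genuinely positive.

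The case of $C(n)$ I would treat symmetrically, organizing by odd parts $o_j=2j-1+2a_j$ and letting the even parts run over $(-q^2;q^2)_\infty$ minus the forbidden values $o_j+2j-3=4j-4+2a_j$ and $o_j+2j-1=4j-2+2a_j$; Lemma \ref{l1} would again be applied with $q\to q^2$, $m=1$, the choice of $b$ being $q^{-2}$ to match these forbidden values. I expect this to be the main obstacle, for precisely the feature that distinguishes this theorem from its predecessor: the part $1$ is now allowed. When $o_1=1$ the lower forbidden value $o_1+2\cdot 1-3=0$ is not an admissible part, so only one even part is actually excluded rather than two; equivalently, the naive contraction produces a factor $(-1;q^2)_\ell$ together with a spurious $(1+q^{0})=2$ arising from ``forbidding'' the nonexistent part $0$. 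The $a_1=0$ terms must therefore be separated off and reconciled by hand before the total matches the left side of \textbf{S20}. This is the analogue of the ``minor technicalities'' mentioned in the proof of Theorem \ref{t3}; once it is dispatched, the chain $A(n)=B(n)=C(n)=D(n)$ is complete.
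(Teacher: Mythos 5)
Your proposal is correct and is essentially the paper's own proof: the paper likewise reduces the theorem to identifying the left sides of \textbf{S20} and \textbf{S99} with $\sum_n C(n)q^n$ and $\sum_n D(n)q^n$, and its only detailed computation is your $D(n)$ argument verbatim (Lemma \ref{l1} with $q$ replaced by $q^2$, $m=1$, $b=1/q$), the $C(n)$ case being waved off with ``the proof for the former follows similarly.''\ The boundary obstacle you flag for $C(n)$ is genuine---a verbatim application of Lemma \ref{l1} with $b=q^{-2}$ would yield $(-1;q^2)_\ell$ on the right and a spurious denominator factor $1+q^{0}$ on the left---and the repair you sketch is exactly the mechanism the paper has already packaged into Lemma \ref{l3}: its double-primed convention discards the factor $1+b$ when $a_1=0$, and the final-step computation \eqref{lilastsuml3} of its proof shows the corrected sum collapses to $1/\left[(q^2;q^2)_\ell(-q^2;q^2)_\ell\right]=1/(q^4;q^4)_\ell$, which is the left side of \textbf{S20}; on this point your treatment is more careful than the paper's.
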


\begin{proof}
Once again, all that is necessary is to show that
{\allowdisplaybreaks
\begin{align*}
(-q^2;q^2)_{\infty}\sum_{n=0}^{\infty}\frac{q^{n^2}}{(q^4;q^4)_n}
&=\sum_{n=0}^{\infty}C(n)q^n\\
(-q;q^2)_{\infty}\sum_{n=0}^{\infty}\frac{q^{n^2+n}}
{(q^2;q^2)_n(-q;q^2)_{n}}
&=\sum_{n=0}^{\infty}D(n)q^n.
\end{align*}}

As in the proof of the previous theorem, we do this for the second identity only, since the proof for the former follows similarly. By Lemma \ref{l1}, with $q$ replaced with $q^2$, $m=1$ and $b=1/q$,
\begin{align*}
(-q;q^2)_{\infty}&\sum_{n=0}^{\infty}\frac{q^{n^2+n}}{(q^2;q^2)_n(-q;q^2)_{n}}
\\
&=\sum_{n=0}^{\infty}\sum_{0\leq a_1\leq a_2\leq \dots \leq a_n}\frac{q^{(2+2a_1)+(4+2a_2)+\dots+
(2n+2a_n)}(-q;q^2)_{\infty} } {\prod_{j=1}^{n}(1+q^{4j-3+2a_{j}})(1+q^{4j-1+2a_{j}})}.
\end{align*}
This last series is the generating function for partitions into distinct parts,  such that if $2j+2a_j$ is the $j$-th even part, then the odd parts $(2j+2a_j) +2j-3$ and $(2j+2a_j) +2j-1$ do not appear. This is precisely the partitions of an integer $n$ counted by $D(n)$.
\end{proof}

This time, as an example, we consider the six partitions of 10 counted by $B(10)$, $C(10)$ and $D(10)$. Those counted by $B(10)$ are
\begin{multline*}
\{1, 1, 1, 1, 1, 1, 1, 1, 1, 1\}, \{4, 1, 1, 1, 1, 1, 1\}, \{4, 4, 1, 1\}, \\ \{6, 1
, 1, 1, 1\}, \{6, 4\}, \{9, 1\},
\end{multline*}
those counted by $C(10)$ are
\begin{equation*}
\{5, 4, 1\}, \{6, 4\}, \{7, 3\}, \{8, 2\}, \{9, 1\}, \{10\},
\end{equation*}
while those counted by $D(10)$ are
\begin{equation*}
\{6, 3, 1\}, \{6, 4\}, \{7, 3\}, \{8, 2\}, \{9, 1\}, \{10\}.
\end{equation*}

Note that $D(10)$ does not count $\{5, 4, 1\}$ (since $e_1=4$ and $e_1+2(1)-1=5$), while $C(10)$ does not count $\{6, 3, 1\}$ (since $o_2=3$ and $o_2+2(2)-1=6$).

\subsection{The Rogers-Selberg Identities}
Before coming to the Rogers - Selberg identities, we recall that the \emph{union} of the partitions $\pi$ and $\lambda$, denoted $\pi \cup \lambda$, is the partition whose parts
are those of $\pi$ and $\lambda$ together, arranged in non-increasing order. For example,
\[
\{4,3,3,2,2,1\}\cup\{5,4,3,2,2,1,1\}=\{5,4,4,3,3,3,2,2,2,2,1,1,1\}.
\]
A \emph{bipartition} of a positive integer $n$ is an ordered pair of partitions $(\pi,\lambda)$ such that $\pi \cup \lambda$ is a partition of $n$. Note that $\pi$ or  $\lambda$ may be empty.

The following identity was proved by Rogers \cite{R94} and also later  by Selberg \cite{S36} and Slater \cite{S52}:
\begin{equation}\label{mod71}
(-q;q)_{\infty}\sum_{n=0}^{\infty}\frac{q^{2n^2}}{(q^2;q^2)_n(-q;q)_{2n}}
=\frac{1}{(q,q^2,q^5,q^6;q^7)_{\infty}}. \tag{\textbf{S33}}
\end{equation}
We may interpret this identity as follows.

\begin{theorem}
For a positive integer $n$, let $A(n)$ denote the number of partitions of $n$ into parts $\equiv \pm 1, \pm 2 (\mod 7)$.

Let $B(n)$ denote the number of bipartitions $(\pi,\lambda)$ of $n$, where $\pi$ is a partition into distinct even parts with a gap of at least 4 between consecutive parts, and $\lambda$ is a partition into distinct parts such that if $e_j$ is the $j$-th part in $\pi$ (where we order the parts in ascending order), then the parts $e_j/2$, $e_j/2+1$ and $e_j/2+2$ are not present in $\lambda$.

Let $C(n)$ denote the number of bipartitions $(\pi,\mu)$ of $n$, where $\pi$ is as above, and $\mu$ is a partition into distinct parts such that if $e_j$ is the $j$-th part in $\pi$ (where,as above, we order the parts in ascending order), then the parts $e_j/2+j-1$, $e_j/2+j$ and $e_j/2+j+1$ are not present in $\mu$.

Then
\[
A(n)=B(n)=C(n).
\]
\end{theorem}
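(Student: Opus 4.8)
The plan is to read the right-hand side of \eqref{mod71} as the obvious generating function $\sum_{n} A(n)q^{n}$ for partitions into parts $\equiv \pm 1,\pm 2\,(\mod 7)$ (equivalently $\equiv 1,2,5,6\,(\mod 7)$), so that, by \eqref{mod71}, it suffices to extract the two bipartition interpretations $B$ and $C$ from the left-hand side $(-q;q)_{\infty}\sum_{n}\frac{q^{2n^2}}{(q^2;q^2)_n(-q;q)_{2n}}$. The crucial observation is that the single factor $\frac{1}{(q^2;q^2)_n(-q;q)_{2n}}$ is produced by \emph{both} Lemma~\ref{l1} and Lemma~\ref{l2} when $m=2$ and $b=1$; applying the two lemmas to the same series therefore yields two genuinely different combinatorial readings, which I claim are $C$ and $B$ respectively.

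In either reading I would first note that, upon multiplying the $n$-th summand by $(-q;q)_{\infty}q^{2n^2}$, the monomial $q^{2n^2+2(a_1+\cdots+a_n)}$ encodes $\pi$: setting $e_j=4j-2+2a_j$, the constraint $0\le a_1\le\cdots\le a_n$ holds exactly when $e_1<\cdots<e_n$ are distinct even parts with consecutive gaps at least $4$, and then $e_j/2=2j-1+a_j$. The surviving factor $(-q;q)_{\infty}$ divided by the lemma's denominator is the generating function for a partition into distinct parts with the exponents occurring in that denominator deleted. For $C$ I apply Lemma~\ref{l1} (that is, \eqref{lid2}) with $m=2$, $b=1$: its denominator blocks have stride $m+1=3$ (consecutive blocks offset by $3$), so the block attached to $a_j$ contributes the exponents $3j-2+a_j,\,3j-1+a_j,\,3j+a_j$, which are $e_j/2+j-1,\,e_j/2+j,\,e_j/2+j+1$. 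Since the stride $3$ exceeds the spacing $\ge 2$ of the $e_j/2$, these blocks are always disjoint, so $\mu$ is exactly a distinct-parts partition avoiding $e_j/2+j-1,\,e_j/2+j,\,e_j/2+j+1$ for each $j$; summing over $n$ gives $\sum_n C(n)q^n$.

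For $B$ I instead apply Lemma~\ref{l2} (that is, \eqref{lid3}) with $m=2$, $b=1$. Now the blocks have stride $m=2$, matching the spacing of the $e_j/2$, so the block attached to $a_j$ contributes $2j-1+a_j,\,2j+a_j,\,2j+1+a_j=e_j/2,\,e_j/2+1,\,e_j/2+2$, precisely the forbidden triple in the definition of $B$. Consecutive blocks can now share their common endpoint, which happens exactly when a gap $e_{j+1}-e_j$ attains its minimal value $4$; the $\sum^{'}$ convention of Lemma~\ref{l2} is exactly what collapses the duplicated factor, so that the forbidden set is the correct \emph{union} of triples and $\lambda$ ranges over distinct-parts partitions avoiding that union. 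Summing over $n$ gives $\sum_n B(n)q^n$, and together with the reading of the right-hand side this yields $A(n)=B(n)=C(n)$.

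I expect the main obstacle to be the bookkeeping that separates $B$ from $C$: both arise from $m=2$ summation formulae with identical right-hand sides, so one must track carefully how the block stride ($3$ versus $2$) interacts with the spacing of the parts $e_j/2$ — this is what produces the positional shift $+j$ in $C$ but leaves $B$ unshifted — and, in the $B$ case, confirm that the overlap of adjacent forbidden triples is accounted for precisely by the primed summation in Lemma~\ref{l2}, so that no excluded part is either over- or under-counted.
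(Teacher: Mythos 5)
Your proposal is correct and takes essentially the same route as the paper's own proof: both read the right side of \eqref{mod71} as $\sum_n A(n)q^n$ and apply Lemma \ref{l1} and Lemma \ref{l2} with $m=2$, $b=1$ to the $n$-th summand of the left side, identifying $e_j=4j-2+2a_j$ so that the stride-$3$ blocks yield the forbidden triples $e_j/2+j-1$, $e_j/2+j$, $e_j/2+j+1$ (giving $C$) and the stride-$2$ blocks yield $e_j/2$, $e_j/2+1$, $e_j/2+2$ (giving $B$). Your extra bookkeeping on block disjointness and on the primed summation absorbing the overlap at minimal gaps simply makes explicit what the paper leaves to the reader.
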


\begin{proof}
The right side of \eqref{mod71} clearly gives
$\sum_{n=0}^{\infty}A(n)q^n$. By Lemmas \ref{l1} and \ref{l2},
respectively, with $b=1$ and $m=2$ in each case,
{\allowdisplaybreaks
\begin{multline*}
(-q;q)_{\infty}\frac{q^{2n^2}}{(q^2;q^2)_n(-q;q)_{2n}} \\=
\sum_{0\leq a_1\leq a_2\leq \dots \leq
a_n}\frac{q^{(2+2a_1)+(6+2a_2)+\dots +(4n-2+2a_n)}(-q;q)_{\infty}}
{\prod_{j=1}^{n}(1+q^{3j-2+a_{j}})(1+q^{3j-1+a_{j}})(1+q^{3j+a_{j}})}\\=
\sum_{0\leq a_1\leq a_2\leq \dots \leq
a_n}'\frac{q^{(2+2a_1)+(6+2a_2)+\dots +(4n-2+2a_n)}(-q;q)_{\infty}}
{\prod_{j=1}^{n}(1+q^{2j-1+a_{j}})(1+q^{2j+a_{j}})(1+q^{2j+1+a_{j}})}.
\end{multline*}
}

Upon noting that the $j$-th addend in the exponent of $q$ in each of
the two multiple sums above is $e_j=4j-2+2a_j$, it can be seen that
summing the first of these sums over all $n$ gives
$\sum_{n=0}^{\infty}C(n)q^n$, while summing the second over all $n$
gives $\sum_{n=0}^{\infty}B(n)q^n$.
\end{proof}

Remark: It is not until $n=14$ do we reach an integer for which there is a difference in the bipartitions counted by $B(n)$ and those counted by $C(n)$: $(\{8,4\}, \{4\})$ is counted by $C(14)$ but not $B(14)$ (since $e_2/2=4$), and $(\{6,2\}, \{6\})$ is counted by $B(14)$ but not $C(14)$ (since $e_2/2+2+1=3+2+1=6$).

A similar analysis (with $b=q$ and $m=2$ in Lemmas \ref{l1} and \ref{l2}) of the next identity, also due independently to Rogers \cite{R17}, Selberg \cite{S36} and Slater \cite{S52},
\begin{equation}\label{mod72}
(-q^2;q)_{\infty}\sum_{n=0}^{\infty}\frac{q^{2n^2+2n}}{(q^2;q^2)_n(-q^2;q)_{2n}}
=\frac{1}{(q^2,q^3,q^4,q^5;q^7)_{\infty}}, \tag{\textbf{S31}}
\end{equation}
leads to the following partition interpretation.

\begin{theorem}
For a positive integer $n$, let $A(n)$ denote the number of partitions of $n$ into parts $\equiv \pm 2, \pm 3 (\mod 7)$.

Let $B(n)$ denote the number of bipartitions $(\pi,\lambda)$ of $n$, where $\pi$ is a partition into distinct even parts greater than 2 with a gap of at least 4 between consecutive parts, and $\lambda$ is a partition into distinct parts greater than 1 such that if $e_j$ is the $j$-th part in $\pi$ (where we order the parts in ascending order), then the parts $e_j/2$, $e_j/2+1$ and $e_j/2+2$ are not present in $\lambda$.

Let $C(n)$ denote the number of bipartitions $(\pi,\mu)$ of $n$, where $\pi$ is as above, and $\mu$ is a partition into distinct parts greater than 1 such that if $e_j$ is the $j$-th part in $\pi$ (where,as above, we order the parts in ascending order), then the parts $e_j/2+j-1$, $e_j/2+j$ and $e_j/2+j+1$ are not present in $\mu$.

Then
\[
A(n)=B(n)=C(n).
\]
\end{theorem}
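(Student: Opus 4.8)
The plan is to mimic the proof of the preceding theorem for \eqref{mod71} almost verbatim, now invoking Lemmas \ref{l1} and \ref{l2} with the parameters $b=q$ and $m=2$, as signalled in the sentence introducing \eqref{mod72}. Since \eqref{mod72} is already a known analytic identity, it suffices to read its right-hand side as $\sum_{n\geq 0}A(n)q^n$ and to show that the two combinatorial expansions of its left-hand side — one via Lemma \ref{l1} and one via Lemma \ref{l2} — are $\sum_{n\geq 0}C(n)q^n$ and $\sum_{n\geq 0}B(n)q^n$, respectively. The right side $1/(q^2,q^3,q^4,q^5;q^7)_\infty$ generates partitions into parts $\equiv 2,3,4,5\pmod 7$, and since $4\equiv -3$ and $5\equiv -2\pmod 7$, this is exactly $\sum_n A(n)q^n$.

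Next I would apply Lemma \ref{l1} (via \eqref{lid2}) with $b=q$ and $m=2$ to rewrite $1/\big((q^2;q^2)_n(-q^2;q)_{2n}\big)$ as a nested sum over $0\leq a_1\leq\dots\leq a_n$, and then multiply by $(-q^2;q)_\infty\, q^{2n^2+2n}$ to obtain
\[
\sum_{0\leq a_1\leq\dots\leq a_n}\frac{q^{\sum_{j=1}^n(4j+2a_j)}(-q^2;q)_\infty}{\prod_{j=1}^n(1+q^{3j-1+a_j})(1+q^{3j+a_j})(1+q^{3j+1+a_j})}.
\]
The key arithmetic is that the exponent splits as $2n^2+2n+2\sum_j a_j=\sum_{j=1}^n(4j+2a_j)$, so the $j$-th addend is $e_j=4j+2a_j$. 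Because $a_1\leq\dots\leq a_n$ and $a_j\geq 0$, these $e_j$ are distinct even parts greater than $2$ with gaps at least $4$; that is, they constitute $\pi$. Writing $e_j/2=2j+a_j$, the three cancelled denominator factors sit at exponents $e_j/2+j-1$, $e_j/2+j$, $e_j/2+j+1$, so the surviving portion of $(-q^2;q)_\infty$ enumerates precisely the partitions $\mu$ permitted in the definition of $C(n)$. Summing over $n$ gives $\sum_n C(n)q^n$.

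An identical computation using Lemma \ref{l2} (via \eqref{lid3}, again $b=q$, $m=2$) yields the same $\pi$-parts $e_j=4j+2a_j$, but now with cancelled factors at exponents $e_j/2$, $e_j/2+1$, $e_j/2+2$; these give the partitions $\lambda$ of $B(n)$, and summing over $n$ produces $\sum_n B(n)q^n$. Combining the three readings with \eqref{mod72} then yields $A(n)=B(n)=C(n)$.

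The point I expect to require the most care is the bookkeeping of the shift caused by taking $b=q$ in place of the $b=1$ used for \eqref{mod71}: each denominator exponent is raised by one, which is exactly what upgrades the even parts $\geq 2$ of that case to even parts $>2$ here (and distinct parts $\geq 1$ in $\lambda,\mu$ to distinct parts $\geq 2$), and which lines up the excluded exponents with $e_j/2+j-1,\,e_j/2+j,\,e_j/2+j+1$ for $C$ and $e_j/2,\,e_j/2+1,\,e_j/2+2$ for $B$. I would also verify explicitly that $(a_1\leq\dots\leq a_n)\mapsto\pi$ is a bijection onto the admissible $\pi$ with exactly $n$ parts, recovering $a_j=e_j/2-2j\geq 0$ from $e_1\geq 4$ and the gap condition, so that summing the nested sums over $n$ enumerates every such bipartition exactly once.
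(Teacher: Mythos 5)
Your proposal is correct and is exactly the argument the paper intends: the paper omits a written proof, saying only that ``a similar analysis (with $b=q$ and $m=2$ in Lemmas \ref{l1} and \ref{l2})'' of \eqref{mod72} yields the result, and your computation carries out precisely that analysis, with the correct parts $e_j=4j+2a_j$ and the correct excluded exponents $e_j/2+j-1,\,e_j/2+j,\,e_j/2+j+1$ for $C(n)$ and $e_j/2,\,e_j/2+1,\,e_j/2+2$ for $B(n)$.
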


Remark: In this case it is not until $n=19$ do we reach an integer for which there is a difference in the bipartitions counted by $B(n)$ and those counted by $C(n)$: $(\{10,4\}, \{5\})$ is counted by $C(19)$ but not $B(19)$ (since $e_2/2=5$), and $(\{8,4\}, \{7\})$ is counted by $B(19)$ but not $C(19)$ (since $e_2/2+2+1=4+2+1=7$).

Lastly, an analysis (with $b=1$ and $m=2$ in Lemmas \ref{l1} and \ref{l2}) of the remaining Rogers-Selberg-Slater identity (\cite{R17}, \cite{S36} and  \cite{S52}),
\begin{equation}\label{mod73}
(-q;q)_{\infty}\sum_{n=0}^{\infty}\frac{q^{2n^2+2n}}
{(q^2;q^2)_n(-q;q)_{2n}} =\frac{1}{(q,q^3,q^4,q^6;q^7)_{\infty}},
\tag{\textbf{S32}}
\end{equation}
leads to the following result.

\begin{theorem}
For a positive integer $n$, let $A(n)$ denote the number of partitions of $n$ into parts $\equiv \pm 1, \pm 3 (\mod 7)$.

Let $B(n)$ denote the number of bipartitions $(\pi,\lambda)$ of $n$, where $\pi$ is a partition into distinct even parts greater than 2 with a gap of at least 4 between consecutive parts, and $\lambda$ is a partition into distinct parts  such that if $e_j$ is the $j$-th part in $\pi$ (where we order the parts in ascending order), then the parts $e_j/2-1$, $e_j/2$ and $e_j/2+1$ are not present in $\lambda$.

Let $C(n)$ denote the number of bipartitions $(\pi,\mu)$ of $n$, where $\pi$ is as above, and $\mu$ is a partition into distinct parts  such that if $e_j$ is the $j$-th part in $\pi$ (where,as above, we order the parts in ascending order), then the parts $e_j/2+j-2$, $e_j/2+j-1$ and $e_j/2+j$ are not present in $\mu$.

Then
\[
A(n)=B(n)=C(n).
\]
\end{theorem}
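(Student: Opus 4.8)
The plan is to follow the template of the two preceding Rogers--Selberg theorems, since \eqref{mod73} has the identical structure. First I would note that the right-hand side of \eqref{mod73}, namely $1/(q,q^3,q^4,q^6;q^7)_\infty$, is transparently the generating function for partitions into parts $\equiv \pm 1, \pm 3 \pmod 7$ (as $1,3,4,6 \equiv 1,3,-3,-1 \pmod 7$), so it equals $\sum_{n\geq 0} A(n)q^n$. It then suffices to exhibit the left-hand side simultaneously as $\sum_{n\geq 0} B(n)q^n$ and as $\sum_{n\geq 0} C(n)q^n$.

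Next I would apply Lemma \ref{l1} and Lemma \ref{l2}, in each case with $b=1$ and $m=2$ (leaving $q$ unchanged), to the summand $(-q;q)_\infty\, q^{2n^2+2n}/[(q^2;q^2)_n(-q;q)_{2n}]$. This converts the $n$-th term into two multiple sums over $0\leq a_1\leq\cdots\leq a_n$ (the Lemma \ref{l2} version carrying the primed convention $\sum'$), each retaining the factor $(-q;q)_\infty$. The key bookkeeping is to write $2n^2+2n=\sum_{j=1}^n 4j$ and absorb the $q^{2(a_1+\cdots+a_n)}$ supplied by the lemmas, so that the total exponent becomes $\sum_{j=1}^n(4j+2a_j)$. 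Setting $e_j:=4j+2a_j$, one checks that the $e_j$ form a sequence of distinct even integers with $e_1\geq 4$ and $e_{j+1}-e_j = 4+2(a_{j+1}-a_j)\geq 4$; this is exactly the partition $\pi$ shared by $B(n)$ and $C(n)$, and since $e_j/2 = 2j+a_j$, the forbidden parts in each case are then expressible through the $a_j$.

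With this in hand, the interpretation of the cofactor $(-q;q)_\infty$ divided by the denominator product falls out. For Lemma \ref{l2} the denominator exponents are $2j-1+a_j,\,2j+a_j,\,2j+1+a_j$, which equal $e_j/2-1,\,e_j/2,\,e_j/2+1$, so removing these factors from $(-q;q)_\infty$ forbids exactly the parts barred from $\lambda$, giving $\sum_n B(n)q^n$. For Lemma \ref{l1} the denominator exponents are $3j-2+a_j,\,3j-1+a_j,\,3j+a_j$, which equal $e_j/2+j-2,\,e_j/2+j-1,\,e_j/2+j$, the parts barred from $\mu$, giving $\sum_n C(n)q^n$. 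Summing each representation over all $n$ then yields $A(n)=B(n)=C(n)$.

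The step I expect to be the main obstacle is justifying that dividing $(-q;q)_\infty$ by these factors genuinely produces the generating function for partitions into \emph{distinct} parts avoiding the listed parts. For Lemma \ref{l1} this is immediate: because the $a_j$ are non-decreasing, the largest forbidden part at index $j$, namely $3j+a_j$, lies strictly below the smallest at index $j+1$, namely $3j+1+a_{j+1}$, so all $3n$ deleted factors are distinct and the division simply excises those parts. For Lemma \ref{l2} the forbidden parts at consecutive indices can collide: when $a_{j+1}=a_j$ the value $2j+1+a_j$ is shared by indices $j$ and $j+1$. This is precisely the eventuality the primed summation was built to absorb, its convention forcing the repeated factor to appear only once; confirming that this convention matches the distinctness requirement on $\lambda$ is the one genuinely delicate point. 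Once it is settled, the three generating functions coincide and the theorem follows.
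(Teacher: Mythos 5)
Your proposal is correct and is essentially the paper's own argument: the paper proves the analogous theorem for \textbf{S33} by expanding the summand via Lemmas \ref{l1} and \ref{l2} with $b=1$, $m=2$, and for \textbf{S32} it simply states that the same analysis (with the exponent now $2n^2+2n=\sum_{j=1}^n 4j$, so $e_j=4j+2a_j$ rather than $4j-2+2a_j$) yields the result, which is exactly the computation you carry out. Your closing discussion of the primed-sum convention absorbing the collision $2j+1+a_j$ when $a_{j+1}=a_j$ is a correct resolution of a detail the paper leaves implicit.
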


This time, it is not until $n=18$ do we reach an integer for which there is a difference in the bipartitions counted by $B(n)$ and those counted by $C(n)$: $(\{10,4\}, \{4\})$ is counted by $C(18)$ but not $B(18)$ (since $e_2/2-1=5-1=4$), and $(\{8,4\}, \{6\})$ is counted by $B(18)$ but not $C(18)$ (since $e_2/2+2=4+2=6$).

\section{Concluding Remarks}
In the bijective part of the hybrid proofs given in the paper,
we have used only the simplest of all bijections, namely, conjugation.
It is likely that other bijections will lead to hybrid proofs of other
basic hypergeometric identities.

The fact that Ramanujan's identity \textbf{Entry 1.4.17} generalizes
the identity in Theorem \ref{t4} (see the remark following Theorem
\ref{t4}) suggests that it may be possible to generalize the
summation formulae in Section 3.


{\allowdisplaybreaks

}
\end{document}